\let\oldsqrt\sqrt
\def\sqrt{\mathpalette\DHLhksqrt}
\def\DHLhksqrt#1#2{%
\setbox0=\hbox{$#1\oldsqrt{#2\,}$}\dimen0=\ht0
\advance\dimen0-0.2\ht0
\setbox2=\hbox{\vrule height\ht0 depth -\dimen0}%
{\box0\lower0.4pt\box2}}
\newcommand{\R}{\mathbb{R}} 
\newcommand{\N}{\mathbb{N}} 
\newcommand{\C}{\mathbb{C}} 
\newcommand{\e}{\varepsilon}
\newcommand{\dist}{\textnormal{dist}} 
\newcommand{\diam}{\textnormal{diam}} 
\newcommand{\supp}{\textnormal{supp}} 
\renewcommand{\phi}{\varphi}
\newcommand{\cA}{{\mathcal A}}
\newcommand{\cC}{{\mathcal C}}
\newcommand{\cF}{{\mathcal F}}
\newcommand{\cK}{{\mathcal K}}
\newcommand{\cL}{{\mathcal L}}
\newcommand{\cN}{{\mathcal N}}
\newcommand{\cV}{{\mathcal V}}
\newcommand{\cX}{{\mathcal X}}
\newtheorem{defi}{Definition}[section]
\newtheorem{remark}[defi]{Remark}
\theoremstyle{plain} 
\newtheorem{thm}[defi]{Theorem}
\newtheorem{lemma}[defi]{Lemma}
\newtheorem{athm}{Theorem}
\numberwithin{equation}{section}
\title[A mixed local and nonlocal operator with drift]
 {The principal eigenvalue of  a mixed local and nonlocal operator with drift}
\author[Craig Cowan]{Craig Cowan}
\address{%
Department of Mathematics, University of Manitoba, Winnipeg, MB, Canada}
\email{craig.cowan@umanitoba.ca}
\thanks{M. El Smaily acknowledges partial support from the Natural Sciences and Engineering Research Council of Canada through the NSERC Discovery Grant RGPIN-2017-04313. P. Feulefack is partially funded by a Research Strategic Initiatives Grant (RSIG22) from the University of Northern British Columbia. C. Cowan acknowledges support from the Natural Sciences and Engineering Research Council of Canada through the NSERC Discovery Grant.}
\author{Mohammad El Smaily}
\address{Department of Mathematics and Statistics, University of Northern British Columbia, 		Prince George, BC, Canada}
\email{mohammad.elsmaily@unbc.ca}
\author{Pierre Aime Feulefack }
\address{Department of Mathematics and Statistics, University of Northern British Columbia, 		Prince George, BC, Canada}
\email{PierreAime.Feulefack@unbc.ca}
\begin{document}
\maketitle

\begin{abstract}
We study the eigenvalue problem involving the mixed local-nonlocal operator $ L:= -\Delta  +(-\Delta)^{s}+q\cdot\nabla$~ in a bounded domain  $\Omega\subset\R^N,$ where a Dirichlet condition is posed on $\R^N\setminus\Omega.$ The field $q$ stands for a drift or advection in the medium. We prove the existence of a principal eigenvalue and a principal  eigenfunction for $s\in (0,1/2]$. Moreover, we prove $C^{2,\alpha}$ regularity, up to the boundary, of the solution to the  problem $Lu=f$ when coupled with a Dirichlet condition and $0<s<1/2$.  To prove the regularity and the existence of a principal eigenvalue, we use a continuation argument, Krein-Rutman theorem    as well as a Hopf Lemma and a maximum principle for the operator $L,$ which we  derive  in this paper.

\end{abstract}
{\footnotesize

\textit{Keywords.} Fractional diffusion with drift, principal eigenvalue and eigenfunction,   mixed local and nonlocal operator,  regularity, non self-adjoint operators, maximum principle, 

\vspace{.4cm}
\textit{2010 Mathematics Subject Classification}.  35A09, 35B50, 35B65, 35R11, 35J67, 47A75.   
}

\tableofcontents

\section{Introduction and main results }\label{intro}

The study of the  principal eigenvalue of an operator is essential for many important results in the analysis of elliptic and parabolic PDE as well as  the analysis of   elliptic and parabolic intergro-differential equations (IDE). For instance, the principal eigenvalue is fundamental in the study of semi-linear problems \cite{BDVV22b,BM21},  bifurcation theory, stability analysis of equilibrium of reaction-diffusion\cite{BCV16, BHN05}, large deviation principle, and risk-sensitive control \cite{AB22}. The principal eigenvalue of an operator also  plays a role in  determining whether  the  maximum principle holds or not for the operator at hand \cite{BNV94,D06, GM02}. 

 We are interested in the study of  the principal eigenvalue for an operator involving an advection term (or drift) and a mixed local (elliptic) and nonlocal operator. We consider the following problem
\begin{equation}\label{eq1}
	\begin{split}
	\quad\left\{\begin{aligned}
L u=-\Delta  u+(-\Delta)^{s} u+q\cdot\nabla u &= \lambda u && \text{ in \quad $\Omega$}\\
		u &=0     && \text{ on } \quad \R^N\setminus\Omega,
	\end{aligned}\right.
	\end{split}
	\end{equation}
where
\begin{equation}\label{comega}
\Omega \text{ is an open bounded domain of $\R^N$ with $\cC^{2,\alpha}$  boundary.}
\end{equation}  The operator  $L$ is an  elliptic operator (non-self-adjoint)   obtained by the superposition of the classical and the fractional Laplacian $(-\Delta)^s$ where $s\in (0,1/2].$ Problem \eqref{eq1} has also an advection term  $q\cdot \nabla u,$  where
\begin{equation}\label{cq}
 q:\Omega\to \R^N \text{   is a  vector field in the H\"older space } \cC^{0,\alpha}(\overline\Omega).
\end{equation} The vector field $q$ can be viewed as a transport flow  in \eqref{eq1}.

We recall that  the operator $(-\Delta)^s$, $s\in (0,1)$, stands for the fractional Laplacian and it  is defined, for a compactly supported function $u:\R^N\to \R$ of class $\cC^2$, by
\begin{equation*}
\begin{split}
 (-\Delta)^s u(x)&=C_{N,s}\lim_{\varepsilon\to 0^+}\int_{\R^N\setminus B_{\varepsilon}(x)}\frac{u(x)-u(y)}{|x-y|^{N+2s}}\ dy. 
 \end{split}
\end{equation*}
The  constant  $C_{N,s}$ in the above definition  is given by
$$C_{N,s}:= \pi^{-\frac{N}{2}}2^{2s}s\frac{\Gamma(\frac{N}{2}+s)}{\Gamma(1-s)},$$ 
and it is chosen  so that the operator $(-\Delta)^s$  is equivalently defined by its Fourier transform $$\cF((-\Delta)^su)=|\cdot|^{2s}\cF(u).$$  It is  known that we have the following limits
\[
\lim_{s\to 0^+} (-\Delta)^s  u=u \quad\text{ and }\quad\lim_{s\to 1^-}(-\Delta)^su=-\Delta u\quad \text{ for } ~u\in \cC^{2}_c(\R^N).
\]

 \begin{defi}\label{define}By a principal eigenvalue of $L$, we mean a value $\lambda_1\in \R$,   for which  \eqref{eq1} admits a positive solution  $u$ ($u>0$) in $\Omega$. Throughout the paper, we will denote by $ \lambda_1=\lambda_1(\Omega, q)$ the first eigenvalue of $L$ in $\Omega$ subject to Dirichlet condition on $\R^N\setminus\Omega$ and by $\phi_1$ the corresponding unique (up to multiplication by a nonzero real)  eigenfunction with a constant sign over $\Omega.$
\end{defi}
 As in the case of a classical elliptic operator,  we can adopt the following definition for the principal eigenvalue of $L$  (see  \cite{ABR23,BDGQ18, AB22, BNV94,BDR20}  and the references therein): 
\begin{equation}\label{coville}
\begin{split}
\lambda_1:= \sup\left\{
\begin{aligned}
\lambda\in \R 
& \text{ such that  $\exists \phi\in \cC^2(\Omega)\cap\cC^1( \overline\Omega) \cap\cC_c(\R^N),~ ~ \phi>0$~  in ~$\Omega$,  }\\ 
&\text{  \ $\phi=0$~ in ~$\R^N\setminus\Omega$~   satisfying ~ $L\phi\ge \lambda\phi$~ in ~$\Omega$\ }
 \end{aligned}
 \right\}.
\end{split}
\end{equation}
The characterization \eqref{coville} of $\lambda_1$ will become clear from the proofs we provide in this paper.

The interest in the study of problems involving mixed local-nonlocal operator has been growing rapidly in recent years. This is due to their ability to describe the  superstition of two stochastic processes with different scales (Brownian motion and L\'evy process) \cite{DV21}. The mixed local-nonlocal operator in the  form (without advection)
$$\text{$L_0:=-\Delta  +(-\Delta)^{s}$,~~ $s\in (0,1)$,}$$
 has received by far great attention from different points of view. This includes existence and non-existence results \cite{AMT23, BDVV22,BDVV22b,GU22,AR21,G23}, regularity results \cite{SVWZ22,BMS23,M19, FSZ22,GK23}, associated eigenvalue problems \cite{BM21, BDVV23, PFR21,GMV23,CVW20}, and radial symmetry results \cite{BVDV21}.

 In this paper, we consider  a  mixed local-nonlocal operator  with the \emph{additional advection term} $q\cdot\nabla$, where $q\in L^{\infty} (\Omega)$ is a bounded vector field.  We aim to study the existence of  the principal eigenvalue and the corresponding eigenfunction in $\Omega$ for $L:=-\Delta  +(-\Delta)^{s} +q\cdot\nabla$ with $s\in (0,1/2]$. To the best of our knowledge, the presence of an advection term has not been addressed before. 

It is important to note that when the operator $L$ does not include an advection term, that  is $L\equiv L_0$, the operator is self-adjoint and  the study of the principal eigenvalue for~ $L_0$~  relies on a variational characterization via the  Rayleih quotient (see \cite{BDVV23,BDVV22b,CVW20}). Namely,
\begin{equation}\label{Reyleih}
\lambda_1(\Omega): =\inf_{u\in \cX^s_0(\Omega)\setminus\{0\}}\frac{[u]_{\cX^s(\Omega)}}{\|u\|^2_{L^2(\Omega)}},
\end{equation}
where the space $\cX^s_0(\Omega)$ and the semi-norm $[\cdot]_{\cX^s(\Omega)}$ are defined in Section \ref{functional}, below. However, in the presence of advection, the operator $L$ is {\em no longer self-adjoint} and so there is no simple variational formulation for the first eigenvalue  as in \eqref{Reyleih}.    We will prove the existence of such principal eigenvalue of $L$ and the corresponding eigenfunction with the aid of the Krein-Rutman theorem (see \cite{D10}). There are many   versions of the Krein-Rutman theorem in the literature.  We will use that of \cite[Theorem 1.2]{D06} and we recall it  in Theorem \ref{K-R} below.

Lastly, we mention that integro-differential equations  arise naturally in the study of stochastic processes with jumps. They describe a biological species whose individuals diffuse either by a random walk or by a jump process according to  the prescribe probabilities \cite{SVWZ22}. The generator of a L\'evy process has the following general structure
\begin{equation}\label{Operator}
\mathcal{L}u:= \sum_{i,j=1}a_{ij}D_{ij}u +\sum_{j=1}q_{j}D_{j}u +PV.\int_{\R^N}(u(x)-u(y)) K(x-y)\ dy,
\end{equation}
where $K$ is a  measurable kernel on $\R^N$ satisfying $\int_{\R^N}\min\{1,|z|^2\}K(z)\ dz<\infty$ \cite{CVW20}. The first and second terms in \eqref{Operator} correspond to the diffusion and  the drift respectively. The study of the operator $\mathcal{L}$ in \eqref{Operator} with  all its components (diffusion, drift and jump) appears quite intriguing. By far, there are just a few contributions in this direction. In \cite{AB22}, while studying the risk-sensitive control for a class of diffusion with jumps, the authors investigated the existence of the principal eigenvalue for the class of operators $\mathcal{L}$ where the kernel is locally integrable.   In  \cite{ABR23}, the authors also considered a locally integrable kernel  and proved the existence of  generalized principal eigenvalue in $\R^N.$  We refer to \cite[Chap. 3]{GM02} where  elliptic problems involving general second order elliptic  intergro-differential operator have been considered. Note that our operator $L$ in \eqref{eq1}  corresponds to $a_{ij}=\delta_{ij}$ in \eqref{Operator}. In this present work, we only consider   an $L$ where the nonlocal operator is replaced by   the fractional Laplacian.
\medskip

We state our first result  as follows.

\begin{thm}\label{Main-Result-1}
Let  $q\in C^{0,\alpha}(\overline{\Omega})$ for some $0<\alpha<1$  and $s\in (0,1/2]$. Then,  there exists a principal eigenpair $(\lambda_1(\Omega, q), \phi_1)$ for the problem \eqref{eq1} such that
\begin{enumerate}[{\rm \bf (a)}]
\item $\lambda_1(\Omega,q)$ is an eigenvalue of $L$ in $\Omega$ and the corresponding  eigenfunction $\phi_1$ has a constant sign in $\Omega$ and it is unique up to multiplication by a nonzero constant. Moreover, if $s\in (0,1/2)$,  $\phi_1\in \cC^{2,\alpha}(\overline\Omega).$ If $s=\frac{1}{2},$ we have $\phi_1\in \cC^2(\Omega)\cap\cC^{1,\alpha}( \overline\Omega)$.
	\item If $\lambda\in\C$ is an eigenvalue for $L$ in $\Omega$, then we have $\lambda_1(\Omega, q)\le |\lambda|.$
	\item $\lambda_1(\Omega, q)$ can be characterized by the following min-max formula
	\begin{equation}\label{minmax}
	\lambda_1(\Omega,q)=\max_{u\in \cV(\Omega)}\inf_{x\in  \Omega} \frac{Lu}{u},
	\end{equation}
	where 
	\[
\cV(\Omega):=\Big\{u\in \cC^2(\Omega)\cap\cC^1( \overline\Omega) \cap \cC_c(\R^N): \ u>0 \text{ in }{ \Omega} \ \text{ and }\ u\equiv 0 \text{ on }\ \R^N\setminus\Omega\Big\}.
\] 
\end{enumerate}
\end{thm}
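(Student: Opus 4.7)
My plan is to apply the Krein--Rutman theorem (as stated in Theorem~\ref{K-R}) to the resolvent of a suitably shifted version of $L$, using the boundary $\cC^{2,\alpha}$ regularity, the maximum principle, and the Hopf lemma for $L$ developed elsewhere in the paper. First, I fix $\lambda_0>0$ large enough that $L+\lambda_0 I$ satisfies the maximum principle proved earlier; then for every $f\in\cC^{0,\alpha}(\overline\Omega)$ the regularity theorem provides a unique solution $u=Kf$ of $Lu+\lambda_0 u=f$ in $\Omega$ with $u\equiv 0$ on $\R^N\setminus\Omega$, together with a Schauder-type estimate producing $u\in\cC^{2,\alpha}(\overline\Omega)$ for $s\in(0,1/2)$ (and $u\in\cC^2(\Omega)\cap\cC^{1,\alpha}(\overline\Omega)$ for $s=1/2$). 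The compact embedding into $\cC^{0,\alpha}(\overline\Omega)$ makes $K$ a compact operator on $\cC^{0,\alpha}(\overline\Omega)$. When $f\ge 0$ and $f\not\equiv 0$, the strong maximum principle yields $Kf>0$ in $\Omega$ and the Hopf lemma yields $\partial_\nu(Kf)<0$ on $\partial\Omega$, so $K$ is strongly positive with respect to the natural cone.

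Theorem~\ref{K-R} applied to $K$ then produces a simple positive spectral radius $\mu_1=r(K)>0$ with an everywhere positive eigenfunction $\phi_1$, and every other eigenvalue $\mu\in\C$ of $K$ satisfies $|\mu|\le\mu_1$. Setting $\lambda_1:=1/\mu_1-\lambda_0$ gives $L\phi_1=\lambda_1\phi_1$, and a single bootstrap through Step~1 with source $\lambda_1\phi_1\in\cC^{0,\alpha}(\overline\Omega)$ lifts $\phi_1$ to the regularity asserted in (a); simplicity of $\mu_1$ yields the uniqueness statement in (a). For (b), any eigenvalue $\lambda\in\C$ of $L$ provides an eigenvalue $1/(\lambda+\lambda_0)$ of $K$ (with $\lambda+\lambda_0\ne 0$ by invertibility of $L+\lambda_0 I$), so $|\lambda+\lambda_0|\ge \lambda_1+\lambda_0$; combining with the triangle inequality $|\lambda+\lambda_0|\le|\lambda|+\lambda_0$ gives $|\lambda|\ge\lambda_1$.

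For the min--max formula (c), inserting $u=\phi_1$ into \eqref{minmax} shows the supremum is at least $\lambda_1$. For the reverse, given $u\in\cV(\Omega)$ with $Lu\ge\mu u$ in $\Omega$, set $t^*:=\inf\{t>0:\ tu\ge\phi_1\ \text{in }\Omega\}$; the Hopf lemma applied to $u$ and $\phi_1$ gives $0<t^*<\infty$, and $v:=t^*u-\phi_1\ge 0$ vanishes at some $x_0\in\overline\Omega$. If $x_0\in\Omega$ this is a global minimum of $v$, where $\nabla v(x_0)=0$, $-\Delta v(x_0)\le 0$, and $(-\Delta)^s v(x_0)=-C_{N,s}\int_{\R^N}v(y)|x_0-y|^{-N-2s}\,dy\le 0$, yielding $Lv(x_0)\le 0$; on the other hand $Lv\ge t^*\mu u-\lambda_1\phi_1$, which at $x_0$ equals $t^*(\mu-\lambda_1)u(x_0)$, forcing $\mu\le\lambda_1$. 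The boundary case $x_0\in\partial\Omega$ is excluded by applying the Hopf lemma to $v$, which (if $\mu>\lambda_1$) is a strict super-solution of $(L-\lambda_1 I)w\ge 0$; this would force $\partial_\nu v(x_0)<0$, contradicting the minimality of $t^*$.

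The main obstacle is really Step~1: the full $\cC^{2,\alpha}$ boundary regularity for the \emph{non-self-adjoint} operator $L$, together with a comparison principle strong enough to justify the strict positivity of $K$. These are precisely the auxiliary theorems developed in separate sections of the paper, and without them the Krein--Rutman framework does not apply. Once they are in hand, the spectral argument and the sliding-comparison proof of the min--max formula are fairly standard, although the boundary exclusion in the proof of (c) again rests essentially on the Hopf lemma for the mixed-local-nonlocal-with-drift operator $L$.
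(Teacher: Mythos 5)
Your overall architecture coincides with the paper's: apply the Krein--Rutman theorem (Theorem~\ref{K-R}) to the solution operator of the Dirichlet problem, use Theorem~\ref{strong-solution} and Theorem~\ref{Holder1} for boundedness and compactness, use the strong maximum principle and the Hopf lemma for strong positivity, then derive (b) from part (ii) of Theorem~\ref{K-R} and (c) from a sliding comparison with $\phi_1$. The shift by $\lambda_0$ is unnecessary here---$L$ has no zeroth-order term, so $Lu=f$ is already uniquely solvable with the maximum principle by Theorem~\ref{strong-solution}---but it is harmless, and your two-case treatment of (c) (interior touching point handled pointwise, boundary case excluded by Hopf) is an acceptable variant of the paper's argument.

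There is, however, one step that fails as written: the choice of $\cC^{0,\alpha}(\overline\Omega)$, $\alpha<1$, as the Banach space on which Krein--Rutman is applied. Strong positivity requires $Kf$ to land in the \emph{interior} of the cone. In $\cC^{0,\alpha}$, any ball of radius $\delta$ around $u$ contains perturbations $v$ vanishing on $\partial\Omega$ with $|v(x)|$ comparable to $\delta\,\dist(x,\partial\Omega)^{\alpha}$, so an interior point of the nonnegative cone must satisfy $u(x)\gtrsim \dist(x,\partial\Omega)^{\alpha}$ near $\partial\Omega$. But $Kf\in\cC^{1}(\overline\Omega)$ vanishes on $\partial\Omega$, hence $Kf(x)=O(\dist(x,\partial\Omega))=o(\dist(x,\partial\Omega)^{\alpha})$, and $Kf$ is never interior to that cone, no matter how strong a Hopf lemma you have. (If instead you meant the cone of all nonnegative functions of $\cC^{0,\alpha}(\overline\Omega)$, its interior consists of functions positive up to the boundary, which $Kf$ also fails to be.) This is exactly why the paper works in $X=\cC^{0,1}(\overline\Omega)$ with zero boundary values: a Lipschitz perturbation of size $\e$ is bounded by $\e\,\dist(x,\partial\Omega)$, so $K^{\circ}=\{u:\ u\ge\e\,\dist(\cdot,\partial\Omega)\ \text{for some}\ \e>0\}$, which is precisely what $w>0$ in $\Omega$ together with $\partial_{\nu}w<0$ on $\partial\Omega$ delivers. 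Replacing your space by $\cC^{0,1}(\overline\Omega)$ (or $\cC^{1}_{0}(\overline\Omega)$) repairs the argument. One further caveat for $s=\tfrac12$: only interior $\cC^{2,\alpha}$ regularity plus $\cC^{1,\alpha}(\overline\Omega)$ is available, so the Hopf lemma invoked must be the version not requiring $\cC^{2}(\overline\Omega)$, i.e.\ Theorem~\ref{Hopf2} rather than Theorem~\ref{Hopf}; the paper is careful about this distinction and your write-up glosses over it.
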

We also prove the following Hopf Lemma for the operator $L.$ We emphasize that the result holds for any $s\in(0,1).$ We will use the following result in proving Theorem \ref{Main-Result-1} but we will state the result in a general setting.

\begin{thm}[Hopf Lemma]\label{Hopf2}
Let $s\in (0,1)$. Suppose that $\Omega\subset \R^N$ is a bounded  $C^2$ domain and let  $c_0\in \R$.  Let $u\in \cC^2(\Omega)\cap\cC(\R^N)\cap \cC^1(\overline{\Omega})$ such that $u$ is  bounded in $\R^N$ and 
\begin{equation}\label{eq-Hopf2}
L  u:=-\Delta  u+(-\Delta)^{s} u+q\cdot\nabla u    \ge 0  \text{ in \quad $\Omega$}.
\end{equation}
Let $x_0\in \partial\Omega.$ Assume that $u(x)=c_0$ on $B_{R_0}(x_0)\cap \partial\Omega$, for some $R_0>0,$ and that $u\geq c_0$ in $\R^N.$  If  $u\not\equiv c_0$ in $\R^N,$ then 
\begin{equation}
\partial_{\nu}u(x_0)<0,
\end{equation} 
where $\nu$ denotes the outer unit normal to $\partial\Omega$ at $x_0$.
\end{thm}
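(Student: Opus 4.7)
The plan is to adapt the classical Hopf barrier argument to the mixed local--nonlocal setting, treating $(-\Delta)^s$ as a perturbation of the dominant local Laplacian. First I would reduce to $c_0=0$ by replacing $u$ by $u-c_0$ (harmless since $(-\Delta)^s c_0=0$ and $q\cdot\nabla c_0=0$), so $u\ge 0$ in $\R^N$, $u\not\equiv 0$, $Lu\ge 0$ in $\Omega$, and $u=0$ on $B_{R_0}(x_0)\cap\partial\Omega$. The strong maximum principle for $L$ derived elsewhere in the paper then forces $u>0$ in $\Omega$, while the $\cC^2$ regularity of $\partial\Omega$ supplies the interior sphere condition: there exists $y_0\in\Omega$ with $B_\rho(y_0)\subset\Omega$ and $\overline{B_\rho(y_0)}\cap\partial\Omega=\{x_0\}$. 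I would then introduce the Gaussian barrier
\[
v(x):=\bigl(e^{-\alpha|x-y_0|^2}-e^{-\alpha\rho^2}\bigr)\chi_{B_\rho(y_0)}(x),
\]
with $\alpha>0$ to be chosen large; note $v\ge 0$, $\supp v\subset\overline{B_\rho(y_0)}$, $v(x_0)=0$, and $\partial_\nu v(x_0)=-2\alpha\rho\, e^{-\alpha\rho^2}<0$.

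The heart of the argument is to show $Lv\le 0$ on the annulus $A:=B_\rho(y_0)\setminus\overline{B_{\rho/2}(y_0)}$ once $\alpha$ is large. A direct computation yields
\[
-\Delta v(x)=\bigl(2N\alpha-4\alpha^2|x-y_0|^2\bigr)e^{-\alpha|x-y_0|^2}\le -\tfrac{1}{2}\alpha^2\rho^2\, e^{-\alpha|x-y_0|^2}\qquad\text{on }A,
\]
provided $\alpha>4N/\rho^2$, while the drift satisfies $|q\cdot\nabla v(x)|\le 2\alpha\rho\,\|q\|_{L^\infty}\, e^{-\alpha|x-y_0|^2}$. For the nonlocal term I would split $(-\Delta)^s v(x)$ into a near integral (over $\{|y-x|<\eta\}$, bounded using the interior estimate $|D^2 v|\le C\alpha^2$) and a far/exterior integral (bounded using $\|v\|_{L^\infty}\le 1$ together with $\int_{\R^N\setminus B_\rho(y_0)}|x-y|^{-N-2s}\,dy$), to arrive at a pointwise bound $(-\Delta)^s v(x)\le C_1\alpha^{2s} e^{-\alpha|x-y_0|^2}+C_2$ on $A$. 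Since $2s<2$, assembling the three contributions gives $Lv\le 0$ on $A$ for $\alpha$ large enough.

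To conclude, pick $m>0$ with $u\ge m$ on the compact set $\overline{B_{\rho/2}(y_0)}\subset\Omega$, then $\varepsilon>0$ small so that $\varepsilon v\le u$ on $\overline{B_{\rho/2}(y_0)}$; combined with $v\equiv 0\le u$ on $\R^N\setminus B_\rho(y_0)$ this yields $w:=u-\varepsilon v\ge 0$ on $\R^N\setminus A$. Since $Lw\ge 0$ in $A$, the maximum principle for $L$ forces $w\ge 0$ on $\overline{A}$. As $w(x_0)=0$ and $w(x_0-t\nu)\ge 0$ for small $t>0$ (since $x_0-t\nu\in A$), Taylor expansion gives $\partial_\nu w(x_0)\le 0$, hence
\[
\partial_\nu u(x_0)\le \varepsilon\,\partial_\nu v(x_0)=-2\alpha\rho\varepsilon\, e^{-\alpha\rho^2}<0.
\]

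The main obstacle is the pointwise control of $(-\Delta)^s v$ on $A$, especially the exterior contribution coming from $\R^N\setminus\overline{B_\rho(y_0)}$ (where $v$ has been truncated to zero), which behaves like $v(x)\cdot\dist(x,\partial B_\rho)^{-2s}$ as $x\to\partial B_\rho(y_0)$. Keeping it dominated by $|{-}\Delta v(x)|\sim \alpha^2 e^{-\alpha|x-y_0|^2}$ uniformly up to $\partial B_\rho(y_0)$ requires a delicate balance between the Gaussian decay of $v$ near the boundary and the power $2s$ of the nonlocal kernel; for larger values of $s$ one may need to refine the boundary profile of the barrier (for instance by coupling with a power-type cutoff) to uphold the estimate.
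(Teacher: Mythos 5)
Your overall architecture (interior ball, barrier $v$, comparison function $u-\varepsilon v$ on an annulus/crescent, maximum principle, then read off the normal derivative) is the same as the paper's, but the barrier itself has a genuine gap that you partly sense yourself. By truncating the Gaussian to zero outside $B_\rho(y_0)$ you create a Lipschitz kink on $\partial B_\rho(y_0)$, and the quantity you must control, $(-\Delta)^s v(x)$ for $x\in A$ near that sphere, is not bounded by your proposed splitting: the exterior contribution equals exactly $v(x)\int_{\R^N\setminus B_\rho(y_0)}|x-y|^{-N-2s}\,dy$, and that integral diverges like $\dist(x,\partial B_\rho)^{-2s}$, so a bound of the form $C_1\alpha^{2s}e^{-\alpha|x-y_0|^2}+C_2$ with a finite $x$-independent $C_2$ is not what the splitting produces; and even granting such a $C_2$, it cannot be absorbed, since near $\partial B_\rho(y_0)$ the good term $-\Delta v$ is only of size $\alpha^2\rho^2e^{-\alpha\rho^2}$, which is exponentially small in $\alpha$, so ``take $\alpha$ large'' makes the comparison with an additive constant \emph{worse}, not better. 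Closing the estimate for a truncated barrier requires exploiting the cancellation between the interior and exterior near-kink contributions (the kink is convex, so the net singular contribution actually has a favorable sign for $s>1/2$), which your near/far decomposition (second derivatives near, $\|v\|_{L^\infty}$ far) discards. Since the theorem is claimed for all $s\in(0,1)$ and you concede the argument may fail ``for larger values of $s$,'' the proof as written does not establish the statement.

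The paper avoids this entirely by never truncating: it takes the globally smooth barrier $v(x)=e^{-\alpha\cK(x)}-e^{-\alpha(1+r^2)^{1/2}}$ with $\cK(x)=(|x-\bar x|^2+1)^{1/2}$, which is \emph{negative} outside the touching ball. The elementary inequality $1-e^{-\rho}\le\rho$ gives $(-\Delta)^s v(x)\le -\alpha e^{-\alpha\cK(x)}(-\Delta)^s\cK(x)$, and $|(-\Delta)^s\cK|\le M$ uniformly because $\cK$ is globally $C^{1,1}$ with bounded Hessian and at most linear growth. Thus every term of $Lv$ carries the common factor $\alpha e^{-\alpha\cK(x)}$, the $-\alpha^2|x-\bar x|^2\cK^{-2}$ term dominates on the crescent for large $\alpha$, and the negativity of $v$ outside $B_r(\bar x)$ replaces your ``$v\equiv0\le u$ outside'' step in the comparison. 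If you want to keep a compactly supported barrier, you would need either a sharper signed analysis of the kink or a boundary profile vanishing faster than linearly (e.g.\ like $\dist(\cdot,\partial B_\rho)^{\beta}$ with $\beta>2s$); otherwise, switch to a globally defined barrier as the paper does.
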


\begin{remark}\label{rem.on.Hopf}  Observe that Theorem \ref{Hopf2} holds for all $0<s<1$ and that the function $u$ is not assumed to be $\cC^2(\overline{\Omega}).$ We only assume that $u$ is differentiable at $x_0\in \partial \Omega.$ We will state another version of the Hopf Lemma in Theorem \ref{Hopf} below. However, the  other version requires more regularity on $u$ and its proof, which turns out to be shorter, relies on an inequality satisfied by $(-\Delta)^s u$ in a neighbourhood of $x_0.$ As $\cC^2$ regularity, up to the boundary, is not confirmed for $s=1/2$, we will see that Theorem \ref{Hopf2} turns out to be more helpful, than Theorem \ref{Hopf}, in proving Theorem \ref{Main-Result-1}.
\end{remark}

 The following three theorems address the regularity of solutions to the linear problem
\begin{equation}\label{eq-Regularity-intro}
	\begin{split}
	\quad\left\{\begin{aligned}
		L  u  :=&-\Delta u  +(-\Delta)^{s}u+q\cdot\nabla u &= f && \text{ in \quad $\Omega$,}\\
	u &=0     &&& \text{ on } \quad \R^N\setminus\Omega.
	\end{aligned}\right.
	\end{split}
	\end{equation}
We mention that the regularity of $u$---up to the boundary of $\Omega$, occurs for $0<s<1/2.$ In the case $s=1/2,$ we prove an interior regularity result. 

\begin{thm}[$\cC^{2,\alpha}$ interior regularity when $s=1/2$]\label{Interior-regularity}
Let  $f\in \cC^{0,\alpha}(\overline{\Omega})$ and $q\in \cC^{0,\alpha}(\overline{\Omega}),$ for some $\alpha\in (0,1).$ Assume that $s=\frac{1}{2}$. Let $u\in W^{2,p}(\Omega)$ be a solution of   \eqref{eq-Regularity-intro}, then $u\in \cC^{2,\alpha}(\overline{\Omega'})$ for every $\Omega'\subset \subset \Omega$. Moreover, there exists a constant $C>0$ such that 
\begin{equation}\label{Est-1/2}
\|u\|_{\cC^{2,\alpha}(\overline{\Omega'})}\le C\|f\|_{\cC^{0,\alpha}(\overline\Omega)}
\end{equation}
\end{thm}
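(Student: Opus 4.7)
The plan is to regard \eqref{eq-Regularity-intro} as a classical Poisson equation for the Laplacian,
\begin{equation*}
-\Delta u = g, \qquad g := f - q\cdot \nabla u - (-\Delta)^{1/2} u,
\end{equation*}
and to bootstrap via interior Schauder estimates for $-\Delta$. The crux is to verify that $(-\Delta)^{1/2}u$ is locally Hölder continuous inside $\Omega$: once that mapping property is in hand, $g$ will lie in $\cC^{0,\alpha}$ on appropriate compact subsets of $\Omega$, and classical Schauder theory closes the argument.

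I would proceed in three steps. First, the hypothesis $u\in W^{2,p}(\Omega)$, together with $u\equiv 0$ on $\R^N\setminus\Omega$, yields $u\in L^\infty(\R^N)$ after (if necessary) a preliminary $L^p$-bootstrap based on the $L^p$ boundedness of $(-\Delta)^{1/2}$ acting on $W^{1,p}$ functions of compact support. Once $p>N$ can be assumed, Morrey's embedding gives $u \in \cC^{1,\beta}(\overline{\Omega_1})$ for some $\beta \in (0,1)$ on any $\Omega_1\subset\subset \Omega$. Second, for $\Omega' \subset\subset \Omega_2 \subset\subset \Omega_1$ and $r := \tfrac12 \dist(\Omega_2, \partial \Omega_1)$, I split for $x\in \Omega_2$
\begin{equation*}
(-\Delta)^{1/2} u(x) = C_{N,1/2} \int_{B_r(x)} \frac{u(x)-u(y)}{|x-y|^{N+1}}\, dy + C_{N,1/2} \int_{\R^N\setminus B_r(x)} \frac{u(x)-u(y)}{|x-y|^{N+1}}\, dy.
\end{equation*}
On $B_r(x)$, the odd term $\nabla u(x)\cdot(y-x)/|x-y|^{N+1}$ has vanishing principal value, so the integrand can be replaced by the Taylor remainder $u(y)-u(x)-\nabla u(x)\cdot(y-x)$, controlled by $[\nabla u]_{\cC^{0,\beta}(\Omega_1)} |y-x|^{1+\beta}$; the tail is smooth in $x\in\Omega_2$ (the kernel is smooth for $|x-y|>r$) and bounded by $\|u\|_{L^\infty(\R^N)}$. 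Comparing the two pieces at nearby points in $\Omega_2$ then yields $(-\Delta)^{1/2}u \in \cC^{0,\beta}(\overline{\Omega_2})$ with an explicit bound involving $\|u\|_{\cC^{1,\beta}(\Omega_1)}$, $\|u\|_{L^\infty(\R^N)}$, and $\dist(\Omega_2,\partial\Omega_1)$.

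Third, combining this with $q\in \cC^{0,\alpha}(\overline\Omega)$, $\nabla u \in \cC^{0,\beta}(\overline{\Omega_1})$, and $f\in \cC^{0,\alpha}(\overline\Omega)$ gives $g \in \cC^{0,\gamma}(\overline{\Omega_2})$ with $\gamma := \min\{\alpha,\beta\}$. Interior Schauder estimates for $-\Delta u = g$ promote $u$ to $\cC^{2,\gamma}(\overline{\Omega_3})$ on any $\Omega_3\subset\subset \Omega_2$. Running the splitting once more---now with $u\in \cC^{2,\gamma}$ locally---produces $(-\Delta)^{1/2}u \in \cC^{1,\gamma}_{\mathrm{loc}} \subset \cC^{0,\alpha}_{\mathrm{loc}}$; hence $g\in \cC^{0,\alpha}$ on a slightly smaller subdomain, and a final Schauder step yields $u\in \cC^{2,\alpha}(\overline{\Omega'})$ together with \eqref{Est-1/2}, the constant $C$ absorbing $\|u\|_{L^\infty(\R^N)}$, $\|q\|_{\cC^{0,\alpha}(\overline\Omega)}$, and the geometry of the nested subdomains. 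The main obstacle is the singular-integral bound of the second step: while $u$ is as smooth as desired on any compact subset of $\Omega$, it is only $L^\infty$ outside $\Omega$ (with a potential derivative jump across $\partial\Omega$), so the decomposition at a scale $r$ strictly smaller than $\dist(\Omega_2,\partial\Omega_1)$ is essential, and one must track the $r$-dependence of the Hölder bounds carefully to avoid blow-up of the Schauder constants in the iteration.
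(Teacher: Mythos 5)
Your proof is correct and follows the same overall strategy as the paper: both arguments freeze the drift and the half-Laplacian into the right-hand side of a Poisson equation $-\Delta u = g$, reduce everything to showing that $(-\Delta)^{1/2}u$ is H\"older continuous in the interior, and then conclude by classical Schauder theory. The difference lies in how that key mapping property is obtained. The paper multiplies by a cutoff $\eta$ and uses the commutator identity $\eta(-\Delta)^{1/2}u=(-\Delta)^{1/2}(\eta u)-u(-\Delta)^{1/2}\eta+I(u,\eta)$, estimating the bilinear term $I(u,\eta)$ by hand and invoking Silvestre's global result to handle $(-\Delta)^{1/2}(\eta u)$ for $\eta u\in\cC^{1,\alpha}(\R^N)$; you instead work directly with $(-\Delta)^{1/2}u$ via a near/far splitting of the singular integral at a scale $r<\dist(\Omega_2,\partial\Omega_1)$. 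Your route is more self-contained and makes the role of the boundary (where $\nabla u$ may jump) completely transparent; the paper's route localizes first so that an off-the-shelf whole-space mapping property can be quoted. One subtlety you should make explicit: $s=1/2$ is the critical case $2s=1$, and the naive first-order Taylor comparison of the near parts at two points $x_1,x_2$ produces a bound of the form $[\nabla u]_{\cC^{0,\beta}}\,|x_1-x_2|^{\beta}\log\bigl(r/|x_1-x_2|\bigr)$ rather than a clean $\cC^{0,\beta}$ seminorm (avoiding the logarithm requires, e.g., the factorization $(-\Delta)^{1/2}=\sum_jR_j\partial_j$ through Riesz transforms, which is essentially what Silvestre's proposition encodes). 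This is harmless in your scheme because the preliminary bootstrap gives $u\in\cC^{1,\beta}$ for \emph{every} $\beta<1$, so you may fix $\beta>\alpha$ and the logarithm is absorbed into $|x_1-x_2|^{\alpha}$; but as literally written your claim ``$(-\Delta)^{1/2}u\in\cC^{0,\beta}$'' overstates what the splitting alone delivers, and your two-step iteration is then unnecessary since $\gamma=\alpha$ can be achieved on the first pass. Finally, note that both your argument and the paper's really produce an estimate in terms of $\|u\|_{\cC^{1,\alpha}(\overline\Omega)}+\|f\|_{\cC^{0,\alpha}(\overline\Omega)}$; writing the constant as ``absorbing $\|u\|_{L^\infty(\R^N)}$'' hides the fact that passing to the stated form \eqref{Est-1/2} requires the a priori bound $\|u\|_{W^{2,p}(\Omega)}\le C\|f\|_{L^p(\Omega)}$, so the dependence should be tracked explicitly.
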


The following theorem provides a $W^{2,p}$ estimate for the solution to the mixed local/nonlocal problem \eqref{eq1}.

\begin{thm}\label{strong-solution} 
Let $\Omega\subset \R^N$ be an open bounded set of class $\cC^{1,1}$ with $N>2s$ and $s\in (0,1/2]$. Assume that  $f\in L^p(\Omega)$ with  $1<p<\infty$ and  $q\in L^{\infty}(\Omega).$ Then, the problem \eqref{eq-Regularity-intro}
	has a unique solution  $u\in W^{2,p}(\Omega)$. Furthermore, there exists a constant $C:=C(N,s,p,\Omega)>0$ such that
	\begin{equation}\label{est1}
\|u\|_{W^{2,p}(\Omega)}\le C\|f\|_{L^p(\Omega)}.
   \end{equation}	
\end{thm}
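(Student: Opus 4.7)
\medskip
\noindent\textbf{Proof proposal.} The plan is to treat $(-\Delta)^s + q\cdot\nabla$ as a \emph{lower-order perturbation} of the Dirichlet Laplacian and to solve \eqref{eq-Regularity-intro} via the method of continuity. For $t \in [0,1]$ set
\[
L_t u := -\Delta u + t\bigl((-\Delta)^s u + q\cdot\nabla u\bigr),
\]
regarding each $L_t$ as a bounded operator $\{u \in W^{2,p}(\Omega): u=0 \text{ on } \R^N\setminus\Omega\} \to L^p(\Omega)$. The operator $L_0$ is the Dirichlet Laplacian on a $\cC^{1,1}$ domain, which is an isomorphism onto $L^p(\Omega)$ by the classical Calderón--Zygmund theory. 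If I can establish a uniform a priori bound
\begin{equation}\label{eq:apriori-plan}
\|u\|_{W^{2,p}(\Omega)} \le C\,\|L_t u\|_{L^p(\Omega)} \qquad \text{for all } t\in [0,1],
\end{equation}
with $C$ independent of $t$, the method of continuity then delivers the invertibility of $L_1 = L$ and the estimate \eqref{est1} simultaneously.

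For the a priori bound I would first rewrite $-\Delta u = L_t u - t\bigl((-\Delta)^s u + q\cdot \nabla u\bigr)$ and apply Calderón--Zygmund to obtain
\[
\|u\|_{W^{2,p}(\Omega)} \le C\Bigl(\|L_t u\|_{L^p(\Omega)} + \|(-\Delta)^s u\|_{L^p(\R^N)} + \|q\|_{L^\infty}\,\|\nabla u\|_{L^p(\Omega)}\Bigr).
\]
The crucial point is that for $s \in (0,1/2]$ the fractional Laplacian has order at most $1$: extending $u$ by zero gives $u \in W^{1,p}(\R^N)$, and since $2s \le 1$ the Bessel potential / Riesz transform characterization yields
\[
\|(-\Delta)^s u\|_{L^p(\R^N)} \le C\,\|u\|_{W^{2s,p}(\R^N)} \le C\,\|u\|_{W^{1,p}(\R^N)} \le C\,\|u\|_{W^{1,p}(\Omega)}
\]
(for $s=1/2$ one invokes the $L^p$ boundedness of the Riesz transforms directly). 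Combining this with the standard interpolation inequality $\|u\|_{W^{1,p}(\Omega)} \le \varepsilon\,\|u\|_{W^{2,p}(\Omega)} + C_\varepsilon \|u\|_{L^p(\Omega)}$ and absorbing, I obtain the intermediate estimate
\begin{equation}\label{eq:inter-plan}
\|u\|_{W^{2,p}(\Omega)} \le C\bigl(\|L_t u\|_{L^p(\Omega)} + \|u\|_{L^p(\Omega)}\bigr).
\end{equation}

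To upgrade \eqref{eq:inter-plan} to \eqref{eq:apriori-plan} I have to control $\|u\|_{L^p(\Omega)}$ by $\|L_t u\|_{L^p(\Omega)}$. For large $p$ (which can afterwards be traded via Sobolev embedding and bootstrapping), this follows from an Alexandrov--Bakelman--Pucci/comparison-principle estimate of the form $\|u\|_{L^\infty(\R^N)} \le C\,\|L_t u\|_{L^p(\Omega)}$, which I would derive from the maximum principle for $L$ proved earlier in the paper by testing against a fixed positive supersolution of $-\Delta w = 1$ on a ball containing $\Omega$ and exploiting that $(-\Delta)^s w \ge 0$ and $q\cdot \nabla w$ is bounded. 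Standard density arguments (approximating $f \in L^p$ by smooth data) and the contraction extended to the full range $1<p<\infty$ by duality or interpolation complete the $L^p$ reduction. Uniqueness then comes for free from the maximum principle applied to $u_1 - u_2$.

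The main obstacle I anticipate is precisely the $L^p$-bound absorption step: the mixed operator $L$ is non-self-adjoint with no immediate energy identity, so one cannot argue by Poincaré in $L^2$ and transfer; one must rely on the pointwise maximum principle derived in this paper together with the fact that the nonlocal term has the correct sign when tested against a supersolution. Once this ABP-style bound is in hand, the method of continuity closes the argument cleanly, the restriction $s \le 1/2$ being used only to ensure that $(-\Delta)^s$ genuinely acts as a \emph{lower-order} perturbation of $-\Delta$ in the $W^{2,p}$ scale.
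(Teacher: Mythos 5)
Your high-level strategy (method of continuity plus a uniform a priori bound) is the same as the paper's, but with two substantive differences. First, you homotope from the pure Dirichlet Laplacian and treat $(-\Delta)^s+q\cdot\nabla$ as the perturbation, whereas the paper takes $L_0=-\Delta+(-\Delta)^s$ as the base point --- for which the full $W^{2,p}$ theory is already available from \cite{SVWZ22} --- and homotopes only the drift. Your intermediate estimate $\|u\|_{W^{2,p}(\Omega)}\le C(\|L_tu\|_{L^p(\Omega)}+\|u\|_{L^p(\Omega)})$ is sound: the zero extension of $u$ lies in $W^{1,p}(\R^N)$, and for $2s\le 1$ the multiplier $|\xi|^{2s}(1+|\xi|^2)^{-1/2}$ is of Mikhlin type, so $(-\Delta)^s$ is indeed controlled by $\|u\|_{W^{1,p}}$ and can be absorbed by interpolation. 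Up to this point your argument would even give an independent route to the $W^{2,p}$ bound of \cite{SVWZ22}.

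The genuine gap is the absorption of $\|u\|_{L^p(\Omega)}$. Testing against a fixed positive supersolution $w$ of $-\Delta w=1$ (suitably truncated so that $(-\Delta)^sw$ makes sense) yields at best $\|u\|_{L^\infty}\le C\|L_tu\|_{L^\infty(\Omega)}$, because the comparison $|u|\le \|L_tu\|_{L^\infty}\,w$ needs the data in $L^\infty$; the bound $\|u\|_{L^\infty}\le C\|L_tu\|_{L^p(\Omega)}$ you invoke is an ABP/Krylov-type estimate that is neither standard for a mixed local--nonlocal operator with drift nor proved anywhere in the paper, and your fallback ``extend to all $1<p<\infty$ by duality'' would require the $L^{p'}$ theory of the formal adjoint $-\Delta+(-\Delta)^s-\div(q\,\cdot)$, a different operator. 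The standard (and the paper's) way to close this step, once injectivity of $L_t$ is known, is a compactness--contradiction argument: if $\|u_n\|_{W^{2,p}}\ge n\|L_tu_n\|_{L^p}$, normalize, pass to a weak $W^{2,p}$ (strong $W^{1,p}$) limit, and obtain a nontrivial solution of the homogeneous problem. Relatedly, your claim that uniqueness ``comes for free from the maximum principle'' hides a real step: Theorem \ref{Strong-maximum-principle} applies to functions in $\cC^2(\Omega)\cap\cC(\R^N)$, while a $W^{2,p}$ solution of the homogeneous problem is not a priori that regular; one must bootstrap as in Lemma \ref{Uniqueness} (via Sobolev embedding and Silvestre's estimate for $s<1/2$, via the interior $\cC^{2,\alpha}$ regularity of Theorem \ref{Interior-regularity} for $s=1/2$) before the pointwise argument is legitimate. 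With the compactness argument substituted for the unproved ABP bound and the regularity bootstrap added, your proof closes.
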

As a consequence of Theorem \ref{strong-solution}, we have the following $\cC^{2,\alpha}$ regularity, up to the boundary,  for \eqref{eq-Regularity-intro} when  $0<s<1/2$.

\begin{thm}[H\"older regularity up to the boundary when $s<1/2$]\label{Holder1} Let $s\in (0,\frac{1}{2})$ and  $ 0<\alpha<1-2s$. Assume that the advection term satisfies $q\in \cC^{0,\alpha}(\overline{\Omega}).$  Then, there is some $C>0$ such that for all $ f \in C^{0,\alpha}(\overline{\Omega})$ there is some $u \in C^{2,\alpha}(\overline{\Omega})$ that satisfies 
\begin{equation}\label{holder}
	\begin{split}
	\quad\left\{\begin{aligned}
		L  u    &= f && \text{ in \quad $\Omega$}\\
	u &=0     && \text{ on } \quad \R^N\setminus\Omega.
	\end{aligned}\right.
	\end{split}
	\end{equation}  Moreover, we have
 $$ \| u \|_{C^{2,\alpha}(\overline{\Omega})} \le C \|f\|_{C^{0,\alpha}(\overline{\Omega})}.$$  
\end{thm}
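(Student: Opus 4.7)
The plan is to bootstrap the $W^{2,p}$-existence result of Theorem \ref{strong-solution} up to a full Schauder estimate, exploiting the fact that for $s<1/2$ the fractional Laplacian $(-\Delta)^s$ has order strictly less than one and therefore smooths globally Lipschitz functions into functions of positive Hölder regularity. First, since $f\in \cC^{0,\alpha}(\overline{\Omega})\subset L^p(\Omega)$ for every $p\in(1,\infty)$, Theorem \ref{strong-solution} delivers a unique $u\in W^{2,p}(\Omega)$ with $\|u\|_{W^{2,p}(\Omega)}\le C\|f\|_{L^p(\Omega)}$. Choosing $p$ large enough that $N/p<1-\alpha$, Morrey's embedding yields $u\in \cC^{1,\beta}(\overline{\Omega})$ for some $\beta\ge\alpha$, with $\|u\|_{\cC^{1,\beta}(\overline{\Omega})}\le C\|f\|_{\cC^{0,\alpha}(\overline{\Omega})}$.

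Next I extend $u$ by zero to all of $\R^N$. Since $u\in \cC^{1,\beta}(\overline{\Omega})$ vanishes on $\partial\Omega$, for every $x\in\Omega$ and its nearest boundary point $z$ one has $|u(x)|\le\|\nabla u\|_{L^\infty}|x-z|$, so the zero extension belongs to $\cC^{0,1}(\R^N)\cap L^\infty(\R^N)$ with norm controlled by $\|f\|_{\cC^{0,\alpha}(\overline{\Omega})}$. Because $s<1/2$ forces $2s+\alpha<1$, a standard Hölder-regularity estimate for the fractional Laplacian (of Silvestre type: $w\in \cC^{0,2s+\alpha}(\R^N)\cap L^\infty(\R^N)$ implies $(-\Delta)^s w\in \cC^{0,\alpha}(\R^N)$ with quantitative control) applies and gives $(-\Delta)^s u\in \cC^{0,\alpha}(\R^N)$ with $\|(-\Delta)^s u\|_{\cC^{0,\alpha}(\overline{\Omega})}\le C\|f\|_{\cC^{0,\alpha}(\overline{\Omega})}$.

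With this in hand I rewrite the mixed equation \eqref{holder} as the purely local Dirichlet problem
\[
-\Delta u \;=\; \tilde f \;:=\; f-(-\Delta)^s u - q\cdot\nabla u\quad\text{in }\Omega,\qquad u=0\text{ on }\partial\Omega,
\]
whose right-hand side $\tilde f$ lies in $\cC^{0,\alpha}(\overline{\Omega})$: $f$ by hypothesis, $(-\Delta)^s u$ by the previous step, and $q\cdot\nabla u$ since $q\in \cC^{0,\alpha}(\overline{\Omega})$ by assumption and $\nabla u\in \cC^{0,\beta}(\overline{\Omega})\hookrightarrow \cC^{0,\alpha}(\overline{\Omega})$. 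Because $\partial\Omega$ is of class $\cC^{2,\alpha}$, classical Schauder theory for the Laplacian (e.g.\ Gilbarg–Trudinger, Theorem 6.14) then yields $u\in \cC^{2,\alpha}(\overline{\Omega})$ together with the estimate $\|u\|_{\cC^{2,\alpha}(\overline{\Omega})}\le C\|\tilde f\|_{\cC^{0,\alpha}(\overline{\Omega})}\le C'\|f\|_{\cC^{0,\alpha}(\overline{\Omega})}$. Uniqueness of the $\cC^{2,\alpha}$ solution follows from the $W^{2,p}$ uniqueness in Theorem \ref{strong-solution}.

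The main obstacle is the middle step: producing a $\cC^{0,\alpha}$ bound for $(-\Delta)^s u$ when the zero-extension of $u$ is only globally Lipschitz—the normal derivative in general jumps across $\partial\Omega$, so results demanding higher global regularity do not apply directly. The hypothesis $s<1/2$ is precisely what rescues the argument: the Lipschitz exponent $1$ must strictly exceed the order $2s$ of the operator so that a positive Hölder margin $1-2s>\alpha$ survives after applying $(-\Delta)^s$. This is exactly why Theorem \ref{Interior-regularity} only produces interior regularity in the critical case $s=1/2$, where this margin collapses.
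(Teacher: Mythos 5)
Your proposal is correct and follows essentially the same route as the paper: bootstrap the $W^{2,p}$ solution of Theorem \ref{strong-solution} to $\cC^{1,\alpha}(\overline{\Omega})$ via Sobolev embedding, extend by zero to a Lipschitz function on $\R^N$, invoke Silvestre's regularity result to place $(-\Delta)^s u$ in $\cC^{0,1-2s}\subset\cC^{0,\alpha}$, and finish with classical Schauder theory for the rewritten Poisson problem. Your closing remark on why the Lipschitz-only regularity of the extension forces the restriction $\alpha<1-2s$ is exactly the mechanism at work in the paper's argument.
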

We briefly comment on the proof of Theorem \ref{Main-Result-1}.  The main ingredient in the  proof of Theorem \ref{Main-Result-1} is   the Krein-Rutman theorem, which relies on  the strong maximum principle for the operator  $L$ and  the $L^p$-theory of the  problem \eqref{eq-Regularity-intro} (see Theorem \ref{strong-solution}).

Indeed, with the aid of the $L^p$-theory of $L_0$ developed in \cite[Theorem 1.4]{SVWZ22} and for more general second order elliptic  intergro-differential operators developed  in \cite[Theorem 3.1.23]{GM02} , we first prove using the method of continuity (see \cite[Theorem 5.2]{GT77}) that for $f\in L^p(\Omega)$, there exists a unique solution  $u\in W^{2,p}(\Omega)$  of  problem \eqref{eq-Regularity-intro}  for any $1<p<\infty$. Then, using the Sobolev (Morrey) embedding  theorem we in fact  have that $u\in \cC^{1,\beta}(\overline\Omega)$,  for any $\beta\in (0,1)$. Moreover,  the extension theorem allows us to extend $u$ by zero to  a $\cC^{0,1}$-function in $\R^N$  and thanks to the  regularity result of \cite[Proposition 2.5]{S07}, we have that $u\in \cC^{2,1-2s}(\overline\Omega)$ for $s\in (0,1/2)$. This allows us to prove  the uniqueness result in Lemma \ref{Uniqueness} and then prove Theorem \ref{strong-solution} for $s\in (0,1/2)$. In the case $s=\frac{1}{2}$, we  obtain an interior $\cC^{2,\alpha}$ regularity for $u$ in Theorem \ref{Interior-regularity}, which also allows us to directly apply the strong maximum principle for $L$ in Theorem \ref{Strong-maximum-principle} and thus complete the  proof of Theorem \ref{strong-solution} for $s=\frac{1}{2}$.


We point out that since our strategy of proving Theorem \ref{Main-Result-1} relies  on   the $L^p$-theory of problem \eqref{eq-Regularity-intro} combined with the application of  the Krein-Rutmen theorem, our result holds for more general mixed local-nonlocal  operators satisfying the strong maximum principle  as  given in \eqref{Operator}  with the kernel of $K$ satisfying 
\begin{equation}\label{c}
\qquad\frac{\kappa_1}{|x-y|^{N+2s}}\le K(x-y)\le \frac{\kappa_2}{|x-y|^{N+2s}},\quad \kappa_1,\kappa_2>0, s\in (0,1/2].
\end{equation}
We refer the interested reader to \cite[Chap. 3]{GM02} for general second order elliptic  intergro-differential operators satisfying such properties. In particular, any nonlocal operator of small order  will satisfy \eqref{c} (see \cite{F23,FJ23} and the references therein).\\

The rest of this paper is organized as follows. In Section \ref{functional},  we introduce some functional spaces. In Section \ref{Preliminary}, we  prove the strong maximum principle and the Hopf Lemma for  $L$. In Section \ref{Lp}, we  develop the $L^p$-theory for $L$ and prove the existence and uniqueness of solution $u\in W^{2,p}(\Omega)$ to problem \eqref{eq-Regularity-intro}. 
Section \ref{Proof-Main} is devoted to the proof of Theorem \ref{Main-Result-1} by using most of the results in the previous sections.

\section{Functional setting}\label{functional}
We start this section by fixing some notation.  
Let $\Omega\subset \R^N$ be an open bounded set. For the vector field $q:\Omega\to \R^N$, we write $q\in L^{\infty}(\Omega)$ (resp. $q\in \cC^{0,\alpha}(\overline{\Omega})$) whenever $q_{j}\in L^{\infty}(\Omega)$ (resp. $q_j\in \cC^{0,\alpha}(\overline{\Omega})$), $j=1,2,\cdots,N.$ We denote  by $\cC^{k,\alpha}(\overline{\Omega})$, $0<\alpha<1$, the Banach space of functions $u\in \cC^k(\overline{\Omega})$ such that derivative of order $k$ belong to $\cC^{0,\alpha}(\overline{\Omega})$ with the norm
\[
\|u\|_{\cC^{k,\alpha}(\overline{\Omega})}:= \|u\|_{\cC^{k}(\overline{\Omega})}+\sum_{|\tau|=k}[D^{\tau}u]_{\cC^{0,\alpha}(\overline{\Omega})},
\]
where
\[
 [u]_{\cC^{0,\alpha}(\overline{\Omega})}=\sup_{x,y\in\overline{\Omega},x\neq y}\frac{|u(x)-u(y)|}{|x-y|^{\alpha}}
\]
and $\cC^{0,\alpha}(\overline{\Omega})$ is the Banach space of functions $u\in \cC^0(\overline{\Omega})$ which are H\"older continuous with exponent $\alpha$ and the  norm  $\|u\|_{\cC^{0,\alpha}(\overline{\Omega})}=\|u\|_{L^{\infty}(\Omega)}+[u]_{\cC^{0,\alpha}(\overline{\Omega})}$.

If $k\in \N$, as usual we set 
$$
W^{k,p}(\Omega):=\Big\{u\in L^p(\Omega)\;:\; \text{ $D^{\alpha}u$ exists for all $\alpha\in \N^{N}$, $|\alpha|\leq k$ and $u\in L^p(\Omega)$ }\Big\}
$$
for the Banach space of $k$-times (weakly) differentialable functions in $L^p(\Omega)$. Moreover, in the fractional setting, for $s\in(0,1)$ and $p\in[1,\infty),$ we set
$$
W^{s,p}(\Omega):=\Big\{u\in L^p(\Omega)\;:\; \frac{u(x)-u(y)}{|x-y|^{\frac{n}{p}+s}}\in L^{p}(\Omega\times \Omega)\Big\}.
$$
The space $W^{s,p}(\Omega)$ is a Banach space with the norm
$$
\|u\|_{W^{s,p}(\Omega)}=\Big(\|u\|_{L^p(\Omega)}^p+\iint_{\Omega\times \Omega}\frac{|u(x)-u(y)|^p}{|x-y|^{n+s p}}\ dxdy\Big)^{\frac{1}{p}}.
$$
We also define the space $\cX^s(\Omega)$ by
\[
\cX^s(\Omega):= \left\{ u\in L^2(\Omega): \quad u|_{\Omega}\in H^1(\Omega);\quad [u]_{\cX^s(\Omega)}<\infty\right\},
\]
where the corresponding Gagliardo seminorm $[\cdot]_{\cX^s(\Omega)}$ is given by
\[
 [u]_{\cX^s(\Omega)}:=\int_{\Omega}|\nabla u|^2\ dx+\int_{\R^N}\int_{\R^N}\frac{|u(x)-u(y)|^2}{|x-y|^{N+2s}}\ dxdy.
\]
Note  that the space $\cX^s(\Omega)$ is a Hilbert space when furnished with the scalar product
\[
\langle u,v\rangle_{\cX^s(\Omega)} := \int_{\Omega}uv\ dx + \int_{\Omega}\nabla u\nabla v\ dx+
\int_{\R^N}\int_{\R^N}\frac{|u(x)-u(y)|^2}{|x-y|^{N+2s}}\ dxdy
\]
and the corresponding norm is given by $\|u\|_{\cX^s(\Omega)}=\sqrt{\langle u,v\rangle_{\cX^s(\Omega)}}$. ~ Define 
\[
\cX_0^s(\Omega):= \big\{ u\in \cX^s(\Omega): \quad u\equiv 0\quad \text{on}\quad \R^N\setminus \Omega\big\}.
\]
Note  that if $u\in\cX^s_0(\Omega)$ then  $u|_{\Omega}\in H^1_0(\Omega)$ due to the regularity assumption of $\partial\Omega$.

Finally,  we define the space $\cL^1_s(\R^N)$ by 
\[
\cL^1_s(\R^N):= \big\{u:\R^N\to \R, \text{ such that $u$ is measurable and }  \|u\|_{\cL_s^1(\R^N)}<\infty\big\},
\]
where
\[
\|u\|_{\cL_1^s(\R^N)}:= \int_{\R^N}\frac{|u(y)|}{1+|y|^{N+2s}} dx.
\]

\section{Hopf lemma, maximum principle, and interior regularity: proof of Theorem \ref{Hopf2} and Theorem \ref{Interior-regularity}}\label{Preliminary}

In this section, we derive some  results  for the operator $L.$ These will be important in the proof of Theorem \ref{Main-Result-1}. 
We start  with  the  following result on the  strong maximum principle for $L$.
\begin{thm}[Strong Maximum Principle]\label{Strong-maximum-principle}
Let $\Omega\subset \R^N$ be an open bounded set and $q\in L^{\infty}(\Omega)$. Let $s\in (0,1)$ and $u\in \cL^1_s(\R^N)$ be a function in $\cC^2(\Omega)\cap\cC(\R^N)$ that satisfies 
\[
\begin{cases}
L  u &  \ge 0~  \text{ in \quad $\Omega$}\\
	u & \ge 0   ~   \text{ on } \quad \R^N\setminus\Omega.
\end{cases}
\]
Then $u>0$ in $\Omega$ or $u\equiv 0$ in $\R^N$.
\end{thm}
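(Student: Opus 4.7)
The plan is to prove the result in two standard steps: first a weak maximum principle to upgrade $u\ge 0$ on $\R^N\setminus\Omega$ to $u\ge 0$ on all of $\R^N$, and then a strong maximum principle to show that if $u$ touches zero at any interior point of $\Omega$ then $u\equiv 0$ in $\R^N$. The whole argument is driven by a pointwise analysis at an interior minimum, combined with the observation that at such a minimum the gradient term $q\cdot\nabla u$ vanishes automatically, so the sign of $q$ plays no role. A pleasant feature of this mixed setting (compared with the purely local case $-\Delta+q\cdot\nabla$) is that the nonlocal contribution makes the strong maximum principle essentially immediate, sparing us any Hopf--type barrier construction.

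For the weak maximum principle, I would argue by contradiction. Suppose $m:=-\inf_{\overline\Omega}u>0$. Since $u\ge 0$ on $\R^N\setminus\Omega$ (and hence on $\partial\Omega$ by continuity) while $\inf_{\overline\Omega}u<0$, the infimum is attained at some $x_0\in\Omega$. Because $u\in\cC^2(\Omega)$ and $x_0$ is an interior minimum, $\nabla u(x_0)=0$ and $-\Delta u(x_0)\le 0$. Moreover $u(x_0)\le u(y)$ for every $y\in\R^N$ (it is the global minimum, since $u(x_0)<0\le u(y)$ on $\R^N\setminus\Omega$), so the integrand defining $(-\Delta)^s u(x_0)$ is pointwise $\le 0$, giving $(-\Delta)^s u(x_0)\le 0$. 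Plugging these into $Lu(x_0)\ge 0$ forces
\[
-\Delta u(x_0)+(-\Delta)^s u(x_0)=0,
\]
hence both terms vanish individually. In particular $(-\Delta)^s u(x_0)=0$, which combined with the sign of the integrand yields $u(y)=u(x_0)$ for almost every $y\in\R^N$; by continuity $u\equiv -m$ on $\R^N$, contradicting $u\ge 0$ outside $\Omega$. Therefore $u\ge 0$ in $\R^N$.

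For the strong maximum principle, suppose now that $u(x_0)=0$ at some $x_0\in\Omega$. Since $u\ge 0$ on $\R^N$, the point $x_0$ is again a global minimum, so $\nabla u(x_0)=0$, $-\Delta u(x_0)\le 0$, and
\[
(-\Delta)^s u(x_0)=-C_{N,s}\int_{\R^N}\frac{u(y)}{|x_0-y|^{N+2s}}\,dy\le 0,
\]
where the integral is absolutely convergent thanks to $u\in\cL^1_s(\R^N)$ and $u\in\cC^2(\Omega)$. Using $Lu(x_0)\ge 0$ exactly as above gives $(-\Delta)^s u(x_0)=0$, whence $u\equiv 0$ a.e.\ in $\R^N$; the continuity of $u$ on $\R^N$ then upgrades this to $u\equiv 0$ everywhere. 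Thus either $u(x)>0$ for every $x\in\Omega$, or $u\equiv 0$ in $\R^N$, as claimed. The only point requiring any care is ensuring that the pointwise evaluation of $(-\Delta)^s u(x_0)$ is legitimate, which is guaranteed by the hypotheses $u\in\cC^2(\Omega)\cap\cL^1_s(\R^N)$ together with the nonnegativity of the integrand near $x_0$ (so the principal value causes no issues).
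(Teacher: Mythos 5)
Your proof is correct and follows essentially the same argument as the paper: evaluate $L$ at a global interior minimum, note that $q\cdot\nabla u$ and the sign constraints on $-\Delta u$ and $(-\Delta)^s u$ force $(-\Delta)^s u(x_0)=0$, and conclude from the sign of the integrand that $u$ is constant. The only difference is presentational — you split the argument into a weak and a strong maximum principle, whereas the paper handles both cases at once by taking the minimum $u(x_0)\le 0$ over $\overline\Omega$.
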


\begin{proof}[Proof of Theorem \ref{Strong-maximum-principle}] 
Suppose by contradiction that $u$ is not positive in $\Omega$. Since $\Omega$ is bounded, $\overline{\Omega}$ is compact. Since  $u$ is continuous in $\R^N$ and $u\ge 0$ in $\R^N\setminus\Omega$,  there is a point $x_0\in \Omega$ with
\begin{equation}\label{min}
u(x_0)=\min_{x\in\overline{\Omega}}u(x)\le 0.
\end{equation}
Therefore, since $q$ is bounded, it follows that $q\cdot\nabla u(x_0)=0$ and $\Delta u(x_0)\ge 0$. Hence, from the definition of the operator $L$ we have that
\[
(-\Delta)^su(x_0)\ge 0. 
\]
Whereas by \eqref{min}, we have that $u(x_0)\le u(x)$ for all $x\in \R^N$. It  follows that
\begin{align*}
0\le (-\Delta)^su(x_0)=P.V.\int_{\R^N}\frac{u(x_0)-u(y)}{|x_0-y|^{N+2s}}\ dy=\int_{\R^N}\frac{u(x_0)-u(y)}{|x_0-y|^{N+2s}}\ dy\le 0.
\end{align*}
Moreover, since the integrand  is  non-positive by assumption and \eqref{min}, we conclude that 
\[
u\equiv u(x_0)\quad \text{ in }\quad\R^N.
\]
Now, since $u\ge 0$ in $\R^N\setminus\Omega$,  it follows that  $u\equiv 0$ in $\Omega$ and therefore  $u\equiv 0$ in $\R^N$. This leads to a contradiction and  the proof is established.
\end{proof}

We now prove the Hopf Lemma stated in Theorem \ref{Hopf2}, for all $s\in(0,1).$

\begin{proof}[{\bf{Proof of Theorem \ref{Hopf2}}}]

Let be  $B_r(\bar x)$  a ball centered at $\bar x\in \Omega$ that touches $\partial\Omega$ at $x_0\in \partial\Omega.$ Let $K$ be the set defined by
$$K:=B_r(\bar x)\cap B_{\frac{r}{2}}(x_0).$$
 We introduce the auxiliary function
\[
v(x):= e^{-\alpha \cK(x)}- e^{-\alpha(1+r^2)^{\frac{1}{2}}},\quad \text{ where }\quad \cK(x):= (|x-\bar x|^2+1)^{\frac{1}{2}}
\]
and $\alpha$ is a positive constant to be chosen later. 
\begin{figure}[ht]
\begin{overpic}[width=0.5\textwidth, tics=10]{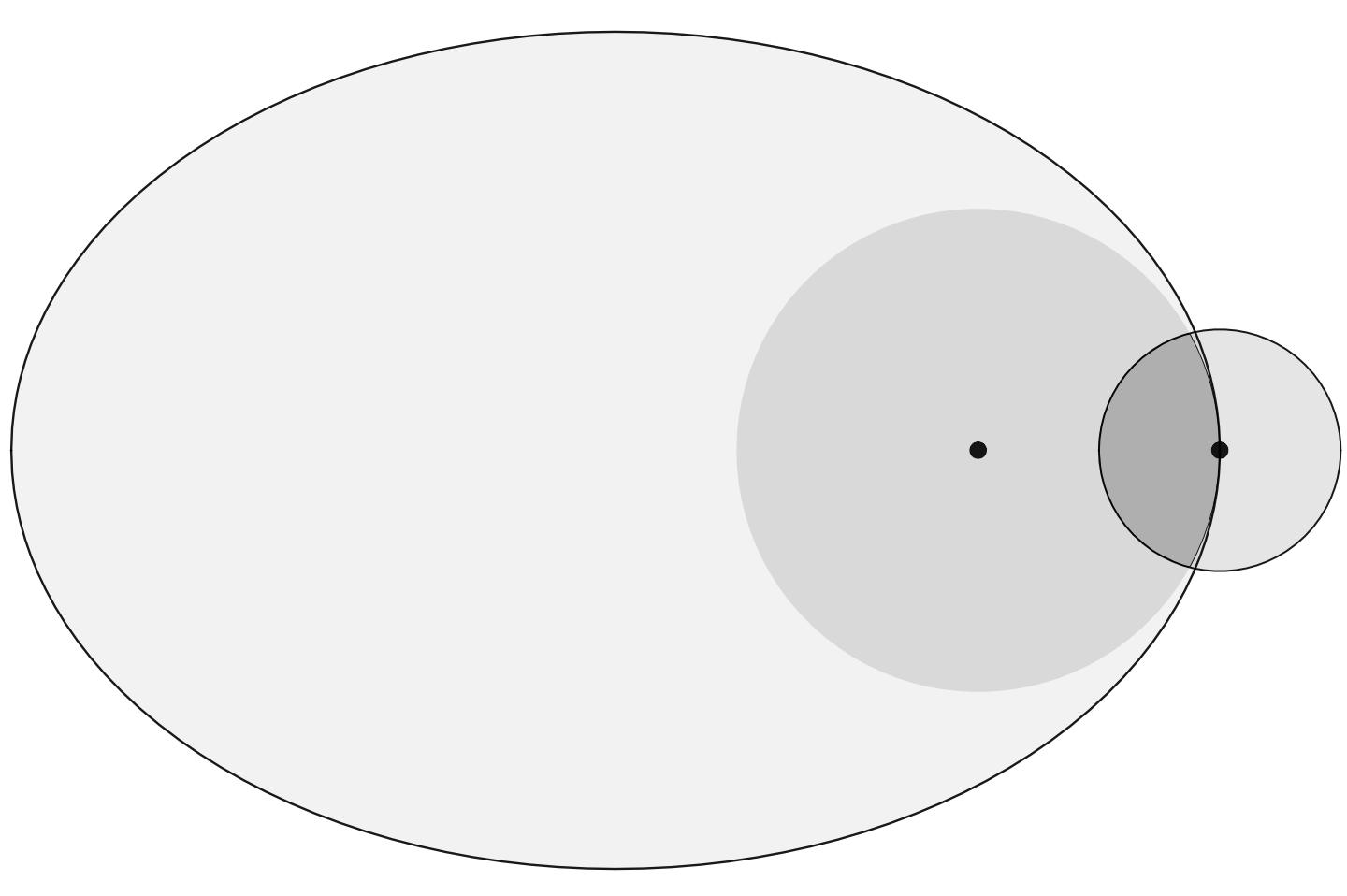}
 \put (30,50) {$ \Omega$}
 \put(67,32){$\bar x$}
  \put(92,32){$x_0$}
  \put(85,36){$K$}
\end{overpic}
\caption{ The open set $K\subset \Omega$ is the intersection of the ball centered at $x_0$ with the ball centered at $\bar x,$ which is tangent to $\partial \Omega$ at $x_0.$ Note that $\overline K\cap \partial \Omega=\{x_0\}$.}
\end{figure}

We have 
\[
v>0\quad\text{in }\quad B_r(\bar x),\quad  v= 0\ \text{ on }\ \partial B_r(\bar x)\quad \text{ and }\quad v<0\ \text{ on } \ \R^N\setminus B_r(\bar x).
\]
For any $x\in\R^N,$ computation shows that 
\begin{equation}\label{Lv}
\begin{split}
L v(x) &=-\Delta v(x) +(-\Delta)^sv(x)+q(x)\cdot \nabla v(x)\\
&= e^{-\alpha\cK(x)}\left ( \frac{\alpha N}{\cK(x)}-\alpha\frac{q(x)\cdot (x-\bar x)}{\cK(x)}-|x-\bar x|^2\Big(\frac{\alpha}{\cK^3(x)}+\frac{\alpha^2}{\cK^2(x)}\Big)\right)\\
&+(-\Delta)^s v(x).
\end{split}
\end{equation}
Observe that $1-e^{-\rho}\le \rho$ for all $\rho\in \R$. Therefore,
\begin{equation}\label{fractional.v}
\begin{array}{l}
\displaystyle{(-\Delta)^sv(x)}
=\displaystyle{\frac{C_{N,s}}{2}\int_{\R^N}\frac{2v(x)-v(x+y)-v(x-y)}{|y|^{N+2s}}\ dy}\vspace{10 pt}\\
= \displaystyle{\frac{C_{N,s} e^{-\alpha \cK(x)}}{2}\int_{\R^N}\frac{2-e^{-\alpha(\cK(x+y)- \cK(x))}-e^{-\alpha(\cK(x-y)- \cK(x))}}{|y|^{N+2s}}\ dy}\vspace{10 pt}\\
\le \displaystyle{\frac{C_{N,s}\alpha e^{-\alpha \cK(x)}}{2}   \int_{\R^N}\frac{(\cK(x+y)- \cK(x))+(\cK(x-y)- \cK(x))}{|y|^{N+2s}}\ dy}\vspace{10 pt}\\
=\displaystyle{ -\alpha e^{-\alpha \cK(x)} (-\Delta)^s\cK(x)}.
\end{array}
\end{equation}
Now, we compute 
\[
\begin{array}{l}
\displaystyle{\frac{2|(-\Delta)^s\cK(x)| }{C_{N,s}}}\vspace{10 pt}\\
\displaystyle{\le \int_{B_1}\frac{|(\cK(x+y)- \cK(x))+(\cK(x-y)- \cK(x))|}{|y|^{N+2s}}\ dy}
\displaystyle{+2\cK(x)\int_{\R^N\setminus B_1}\frac{1 }{|y|^{N+2s}} \ dy}\vspace{10 pt}\\
\displaystyle{\le  \int_{B_1}\int_0^1\frac{|\langle\nabla\cK(x+ty))-\nabla \cK(x-ty),y\rangle|}{|y|^{N+2s}}\ dt dy}+\\
\displaystyle{ 2(\diam(\Omega)^2+1)\int_{\R^N\setminus B_1}\frac{1 }{|y|^{N+2s}}\ dy}\vspace{10 pt}\\
\displaystyle{\le \int_{B_1}\int_0^1\frac{|\nabla\cK(x+ty))-\nabla \cK(x-ty)|}{|y|^{N+2s-1}}\ dt dy+\frac{2\omega_{N-1}(\diam(\Omega)^2+1)}{2s}}.
\end{array}
\]
We also have
\[
\begin{aligned}
|\nabla\cK(x+ty))&-\ \nabla \cK(x-ty)|=\displaystyle{\left|\frac{(x+ty)}{\cK(x+ty))}-\frac{(x-ty)}{\cK(x-ty)}\right|}\vspace{10 pt}\\
&=\displaystyle{\left|ty\left(\frac{1}{\cK(x+ty)}+\frac{1}{\cK(x-ty)}\right)+x\left(\frac{1}{\cK(x+ty)}-\frac{1}{\cK(x-ty)}\right)\right|}\vspace{10 pt}\\
&\le \displaystyle{2t|y|+\left|\frac{1}{\cK(x+ty)}-\frac{1}{\cK(x-ty)}\right||x|}, \text{ as $\cK\geq 1$}\vspace{10 pt}\\
& \le \displaystyle{2t|y|\left(1+\left|\nabla \left(\frac{1}{\cK(z)}\right)\right||x|\right)}, \text{ for some $z$ in the segment $[x-ty, x+ty]$}\vspace{10 pt}\\
&\leq\displaystyle{ 2t|y|\left(1+\left|\frac{z-\bar x}{2{\cK}^3(z)}\right||x|\right)}\leq \displaystyle{2t|y|\left(1+\left|z-\bar x\right||x|\right)} \vspace{10 pt}\\
&\leq \displaystyle{2t|y|\left(1+\left|z- x\right||x|+\left|\bar x\right||x|\right)}\le  \displaystyle{2t|y|\left(1+(3t|y|+|\bar x|)|x|\right)}.
\end{aligned}
\]
Since $x,\bar x\in \Omega$  and $\Omega$ is bounded, we have $|x|,|\bar x|\le D$, where $D$ is a positive constant that depends on $\Omega.$ Also, $|y|<1$ for $y\in B_1$. Therefore, 
\begin{equation}\label{fractional.k}
\begin{array}{l}
{|(-\Delta)^s\cK(x)|  }\leq\displaystyle{\frac{C_{N,s}}{2}\int_{B_1}\int_0^1\frac{|\nabla\cK(x+ty))-\nabla \cK(x-ty)|}{|y|^{N+2s-1}}\ dt dy}~+\vspace{10 pt}\\
\qquad  \qquad \qquad\displaystyle{\frac{2\omega_{N-1}(\diam(\Omega)^2+1)}{2s}}\vspace{10 pt}\\
\le  \displaystyle{\frac{C_{N,s}}{2}\int_{B_1}2t\int_0^1\frac{\left(1+ (3t|y|+|\bar x|)|x|\right)}{|y|^{N+2s-2}}\ dt dy+\frac{2\omega_{N-1}(\diam(\Omega)^2+1)}{2s}}\vspace{10 pt}\\
\le  \displaystyle{\frac{C_{N,s}}{2}\int_{B_1}\frac{2(1+(3+D)D)}{|y|^{N+2s-2}}\  dy+\frac{2\omega_{N-1}(\diam(\Omega)^2+1)}{2s}}\vspace{10 pt}\\
\le \displaystyle{ {C_{N,s}}(1+(3+D)D)\frac{\omega_{N-1}}{2-2s}+\frac{2\omega_{N-1}(\diam(\Omega)^2+1)}{2s}}\vspace{10 pt}\\
:=M.
\end{array}
\end{equation}
We denote the constant obtained in the upper bound of $(-\Delta)^s\cK(x)$ by $M.$  Thus, it follows from \eqref{Lv}, \eqref{fractional.v} and \eqref{fractional.k} that,  for all $x\in \R^N,$
\[
\begin{split}
    L v (x)&\le e^{-\alpha\cK(x)}\left ( \frac{\alpha N}{\cK(x)}-\alpha\frac{q(x)\cdot( x-\bar x )}{\cK(x)}-|x-\bar x|^2\Big(\frac{\alpha}{\cK^3(x)}+\frac{\alpha^2}{\cK^2(x)}\Big) -\alpha M\right).
\end{split}
\]
Now, if $x\in K=B_r(\bar x)\cap B_{\frac{r}{2}}(x_0)$, we have $|x-\bar x|\ge \frac{r}{2}$ and hence
\[
\begin{split}
    L v(x) &\le e^{-\alpha\cK(x)}\left ( \frac{\alpha N}{\cK(x)}+\alpha\frac{\|q\|_{L^{\infty}} |x-\bar x|}{\cK(x)}-\frac{r^2}{4}\Big(\frac{\alpha}{\cK^3(x)}+\frac{\alpha^2}{\cK^2(x)}\Big) -\alpha M\right). 
\end{split}
\]
Since $\cK(x)\geq 1,$ for all $x,$ we can choose $\alpha$ large enough so that 
\[
Lv< 0 \quad\text{ in } \quad B_{\frac{r}{2}}(x_0)\cap \Omega.
\]
Consider the function $w:=-u +\varepsilon  v +c_0,$ where $\varepsilon$ is a positive constant to be chosen later. Since $u\geq c_0$ in $\R^N,$ the maximum principle in Theorem \ref{Strong-maximum-principle}, applied to $u-c_0,$ yields that $u>c_0$ in $\Omega$ (as $u\not\equiv c_0$ in $\R^N$). As the set $\overline B_{r}(\bar x)\setminus K$ is compact,  the minimum of $u$ over $\overline B_{r}(\bar x)\setminus K$ is attained. So this minimum will be strictly greater than $c_0$. That is, 
 $$\min_{\overline B_{r}(\bar x)\setminus K} u(x)>c_0.$$ 
 Thus, we can find  a constant $\delta>0$ such that 
 \[
u\ge c_o+\delta \ \text{ in }\quad \overline B_{r}(\bar x)\setminus K.
 \]
Then, for $x\in \overline B_{r}(\bar x)\setminus K$
\[
w(x)\le -\delta+\varepsilon v=-\delta+\varepsilon\big( e^{-\alpha \cK(x)}- e^{-\alpha(1+r^2)^{\frac{1}{2}}}\big)\le -\delta+\varepsilon\big( 1- e^{-\alpha(1+r^2)^{\frac{1}{2}}}\big).
\]
Chosen $\varepsilon$ sufficiently small, say
\[
\varepsilon< \frac{\delta}{1- e^{-\alpha(1+r^2)^{\frac{1}{2}}}},
\]
we have 
\[
w<0\quad \text{ in }\quad \overline B_{r}(\bar x)\setminus K.
\]
Since $u\ge c_0$  and $v<0$ in $\R^N\setminus  B_{r}(\bar x)$, we have that  $w<0$ on $\R^N\setminus K$. We also have 
\[
w<0\quad \text{ in } \ \partial K\setminus \{x_0\} \ \text{ and } \ w(x_0) =0.
\]
 Moreover,
\[
Lw= L(-u+\varepsilon v+c_0)\le \varepsilon Lv<0 \quad \text{ in }\quad K.
\]
From the maximum principle, applied to $w$, we obtain $w<0$ in $K$ (since $w\not\equiv 0$ in $\R^N$). As $w(x_0)=0,$ it follows that the maximum of $w$ over $\overline{K}$ is attained at $x_0.$
Therefore, the normal derivative $\partial_\nu w$ satisfies
\begin{equation}\label{d-derivative}
\partial_{\nu}w(x_0)=-\partial_{\nu}u(x_0)+\varepsilon\partial_{\nu}v(x_0)\ge 0.
\end{equation}
We compute now the normal derivative of $v$ over $\partial B_r(\bar x). $ We have 
\[ \nabla v(x)= -\alpha \frac{(x-\bar x)}{\cK(x)}e^{-\alpha \cK(x)}, \quad\text{ for all } x\in \Omega.\]
Thus, for $x\in \partial B_r(\bar x),$ we have 
\[\partial_\nu v(x)=(x-\bar x)\cdot \nabla v(x)= -\alpha \frac{|x-\bar x|}{\cK(x)}e^{-\alpha \cK(x)}<0.\]
In particular, $\partial_\nu v(x_0)<0$ and it follows from \eqref{d-derivative} that 
\[
\partial_\nu u(x_0)\leq \varepsilon\partial_\nu v(x_0)<0.
\]
This completes the proof.
\end{proof}
We state and prove another version of the Hopf Lemma in the next theorem. We refer the reader to Remark \ref{rem.on.Hopf} above for more details on the difference between Theorem \ref{Hopf} and Theorem \ref{Hopf2}.
\begin{thm}[Hopf Lemma]\label{Hopf}
Suppose that $\Omega\subset \R^N$ is a bounded  $C^2$ domain and let  $c_0\in \R$.  Let $u\in \cC^2(\overline\Omega)\cap\cC(\R^N)$ such that $u$ is  bounded in $\R^N$ and 
\begin{equation}\label{eq-Hopf}
L  u   \ge 0  \text{ in \quad $\Omega$}.
\end{equation}
Let $x_0\in \partial\Omega$. Assume that $u(x)=c_0$ on $B_{R_0}(x_0)\cap \partial\Omega$, for some $R_0>0,$ and that $u\geq c_0$ in $\R^N.$  If  $u\not\equiv c_0$ in $\R^N,$ then 
\begin{equation}
\partial_{\nu}u(x_0)<0,
\end{equation} 
where $\nu$ denotes the outer unit normal to $\partial\Omega$ at $x_0$.
\end{thm}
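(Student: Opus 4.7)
The plan is to exploit the additional regularity $u\in\cC^2(\overline\Omega)$ to establish a pointwise upper bound on $(-\Delta)^s u$ in an interior neighborhood of $x_0$. This will reduce the mixed inequality $Lu \geq 0$ to the purely local inequality $-\Delta u + q\cdot\nabla u \geq -C$ in that neighborhood, after which a classical Hopf barrier argument applies. This is precisely the ``inequality satisfied by $(-\Delta)^s u$ in a neighborhood of $x_0$'' mentioned in Remark \ref{rem.on.Hopf}.

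I proceed as follows. First, apply Theorem \ref{Strong-maximum-principle} to $\tilde u := u-c_0$ (using that $L$ annihilates constants, so $L\tilde u = Lu \geq 0$, and $\tilde u \geq 0$ in $\R^N$ with $\tilde u \not\equiv 0$) to obtain $\tilde u > 0$ in $\Omega$. Second, combine $u \in \cC^2(\overline\Omega) \cap L^\infty(\R^N)$ with the boundary information $u \equiv c_0$ on $B_{R_0}(x_0)\cap\partial\Omega$ (which forces the tangential derivatives of $\tilde u$ at $x_0$ to vanish and, via Taylor expansion, gives the decay $\tilde u(x) \leq C\, d(x,\partial\Omega)$ for $x$ close to $x_0$) to estimate $(-\Delta)^s u(x)$: the symmetric-form integral over $\{|y|<d(x,\partial\Omega)\}$ is controlled by $\|u\|_{\cC^2(\overline\Omega)}$, while the integral over the complement is controlled by the decay of $\tilde u$ and the $L^\infty$ bound. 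This produces a uniform bound $(-\Delta)^s u(x) \leq C$ on some neighborhood $U\subset\Omega$ of $x_0$. Third, since $\partial\Omega\in \cC^2$, pick an interior ball $B_r(\bar x) \subset \Omega$ tangent to $\partial\Omega$ at $x_0$ with $\overline{B_r(\bar x)}\setminus\{x_0\} \subset U$, so that $-\Delta u + q\cdot\nabla u \geq -C$ holds in $B_r(\bar x)$. Fourth, introduce the classical barrier $h(x) = e^{-\alpha|x-\bar x|^2} - e^{-\alpha r^2}$; a direct computation shows that for $\alpha$ large, $-\Delta h + q\cdot\nabla h \leq -\beta < 0$ throughout the annulus $A = B_r(\bar x)\setminus B_{r/2}(\bar x)$. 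Fifth, by the first step $\tilde u \geq \delta$ on the compact set $\partial B_{r/2}(\bar x)$ for some $\delta>0$; choosing $\eta$ with $\eta \geq C/\beta$ and $\eta\|h\|_\infty \leq \delta$, the function $w:=\tilde u -\eta h$ satisfies $-\Delta w + q\cdot\nabla w \geq 0$ in $A$ and $w\geq 0$ on $\partial A$. The classical maximum principle then yields $w\geq 0$ in $A$, and since $w(x_0)=0$ we conclude $\partial_\nu u(x_0) \leq \eta\, \partial_\nu h(x_0) <0$.

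The main obstacle is the uniform upper bound on $(-\Delta)^s u$ near the boundary carried out in the second step. A naive estimate of the far part of the singular integral produces a term of order $\tilde u(x)/d(x,\partial\Omega)^{2s}$, which for $s \leq 1/2$ is absorbed by the Lipschitz decay $\tilde u(x) \leq C d(x,\partial\Omega)$; for $s>1/2$, however, one must exploit the full $\cC^2$ structure together with the tangential vanishing of $\nabla \tilde u$ at $x_0$ to gain enough decay. A secondary, minor concern is the compatibility of the parameters $(\alpha,\eta,r)$ in the final step, which is settled by choosing the ball radius $r$ small enough relative to the constants $C$, $\beta$, and $\delta$.
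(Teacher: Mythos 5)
Your overall architecture --- control $(-\Delta)^s u$ near $x_0$, reduce to a one-sided local inequality for $-\Delta+q\cdot\nabla$, then invoke a classical Hopf/barrier argument --- is the same as the paper's, and your first step (applying Theorem \ref{Strong-maximum-principle} to $u-c_0$) is correct. The gap is in the quality of the bound you extract in your second step. You obtain only $(-\Delta)^s u\le C$ (at best $(-\Delta)^s u(x)\le C\,d(x,\partial\Omega)^{1-2s}+C\,d(x,\partial\Omega)^{2-2s}$), hence the reduced inequality $-\Delta u+q\cdot\nabla u\ge -C$. This retains too little information: the Hopf conclusion is false under the data you keep. For instance, $u(x)=(1-|x|^2)^2$ on $\Omega=B_1$ satisfies $-\Delta u=4N-(4N+8)|x|^2\ge -8$ in $B_1$, $u>0$ in $B_1$, $u=0$ on $\partial B_1$, yet $\partial_\nu u\equiv 0$ on $\partial B_1$. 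So no barrier bookkeeping can close your step 5 from the reduced hypotheses alone, and indeed your ``minor'' compatibility issue is exactly where this surfaces: you need $\eta\beta\ge C$ and $\eta\|h\|_{L^\infty}\le\delta$ simultaneously, i.e.\ $C\|h\|_{L^\infty}\le\beta\,\delta$, and shrinking $r$ does not help because $\delta=\min_{\partial B_{r/2}(\bar x)}(u-c_0)$ itself degenerates like $r^2$ precisely in the scenario $\partial_\nu u(x_0)=0$ that you are trying to exclude, while the optimized left-hand side is also of order $r^2$.

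What the paper actually uses is the \emph{sign} condition $(-\Delta)^s u\le 0$ in $B_\rho(x_0)\cap\Omega$ for some $0<\rho<R_0$; this is the ``inequality satisfied by $(-\Delta)^s u$ in a neighbourhood of $x_0$'' alluded to in Remark \ref{rem.on.Hopf}, taken from inequality (2.9) in the proof of Theorem 2.9 of \cite{BDVV23}. Its proof exploits that $u\ge c_0$ in all of $\R^N$ together with $u=c_0$ on the boundary patch, not merely $u\in\cC^2(\overline\Omega)\cap L^\infty(\R^N)$. With $(-\Delta)^s u\le 0$ one gets $-\Delta u+q\cdot\nabla u\ge Lu\ge 0$ in $B_\rho(x_0)\cap\Omega$, and the classical strong maximum principle and Hopf lemma for $-\Delta+q\cdot\nabla$ apply verbatim, with no inhomogeneous term to absorb. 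So you must either prove this sign inequality (that is the real content of this version of the lemma) or abandon the purely local reduction and run a genuinely nonlocal barrier as in the proof of Theorem \ref{Hopf2}.
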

\begin{proof}[{\bf Proof of Theorem \ref{Hopf}}]
 The proof mostly relies on the fact that 
 \begin{equation}\label{F-Laplacian-less}
 (-\Delta)^su\le 0 \text{ in }B_\rho(x_0)\cap \Omega,\text{ for some $0<\rho<R_0,$}
 \end{equation}
and the Hopf lemma for elliptic operators. The proof of inequality \eqref{F-Laplacian-less} is done in details in \cite[inequality (2.9) in the proof of Theorem 2.9]{BDVV23} and we will omit it here. 

We note that if there is a point $y\in B_\rho(x_0)$ such that $u(y)=c_0,$ we apply the maximum principle in Theorem \ref{Strong-maximum-principle} to \eqref{eq-Hopf} (knowing that $u\geq c_0$ in $\R^N$) and conclude  that $u\equiv c_0$ in $\R^N, $ which is a contradiction.

Now, we combine \eqref{eq-Hopf} and \eqref{F-Laplacian-less} to deduce that there exists $\rho>0$ such that
 \begin{equation}\label{elliptic}
 0\le Lu\le -\Delta u +q\cdot \nabla u\quad \text{ in }\quad B_{\rho}(x_0)\cap \Omega, 
 \end{equation}
and $u\geq c_0$ in $\R^N.$  The elliptic maximum principle (see \cite[Lemma 3.4]{GT77}, for e.g.) implies that either $u\equiv c_0$ in $B_\rho (x_0)\cap \Omega$ (this cannot happen because of the note above) or $u>c_0$ in $B_\rho(x_0)\cap \Omega.$ Moreover, the Hopf Lemma for elliptic operators (here, we have $-\Delta+q\cdot \nabla$) implies that $\partial_\nu u (x)<0$ for all $x \in B_{\rho}(x_0)\cap \partial\Omega,$ which is part of  $\partial(B_{\rho}(x_0)\cap \Omega) $. In particular, we have 
$ \partial_{\nu}u(x_0)<0$
and  this completes the proof.
\end{proof}

Next, we give the proof of the interior regularity when $s=1/2.$ We mention that, for $0<s<1/2,$ we will have regularity   $C^{2,\alpha}(\overline \Omega)$ up to the boundary. The latter is done in Theorem \ref{Holder1}.

\begin{proof}[{\bf Proof of Theorem \ref{Interior-regularity}}] Let $f\in \cC^{0,\alpha}(\overline\Omega)$. 
Let  $\Omega'$ and $\Omega_1$ be two open subsets of $\Omega\subset \R^N$ such that 
$$\Omega'\subset\subset\Omega_1\subset \subset \Omega.$$ 
Define the cut-off function $\eta\in \cC_c^{\infty}(\Omega_1)$ as
\begin{equation}\label{cut-off}
\eta(x)=1\quad\text{ if }\quad x\in \Omega'\qquad\text{and}\qquad
\eta(x)= 0\quad\text{ if }\quad x\in \R^N\setminus \Omega_1,
\end{equation}
such that $0\le \eta(x)\le 1$ for all $ x\in \R^N$ and there exists $C_1,C_1>0$ such that
\[
|D\eta|<\frac{C_1}{\dist(\Omega',\partial\Omega)}\qquad\text{and}\qquad |D^2\eta|<\frac{C_2}{(\dist(\Omega',\partial\Omega))^2}.
\]
We set 
\[
v:=\eta u\quad\text{ and }\quad w:=(1-\eta)u
\]
Since $u\in W^{2,p}(\Omega)$, it holds that  $u\in \cC^{1,\alpha}(\overline{\Omega})$ applying the Bootstrap argument as in Lemma \ref{Uniqueness} with $\beta=\alpha$. Then, we  compute
\begin{align*}
-\Delta v&=-\eta\big((-\Delta)^{\frac{1}{2}}u +q\cdot \nabla u - f\big)-2\nabla u\cdot \nabla \eta-u\Delta \eta\\
&:=\widetilde f.
\end{align*}
We note that all elements of $\widetilde f$  are supported in $\Omega_1$.  We need to show that $\widetilde f\in \cC^{0,\alpha}(\R^N)$.  To do so, we only need to show that $\eta(-\Delta)^{\frac{1}{2}}u\in \cC^{0,\alpha}(\R^N)$ since other terms  follow easily. We write 
$$\eta(-\Delta)^su= (-\Delta)^sv -u(-\Delta)^s\eta +I(u,\eta)$$
where
$$
I(u,\eta)(x):= \int_{
\R^N}\frac{(u(x)-u(x+z))(\eta(x)-\eta(x+z))}{|z|^{N+1}}\ dz.
$$
 Since $\supp \ v\subset \Omega$, we have 
\begin{equation}
\|v\|_{\cC^{1,\alpha}(\R^N)}= \|v\|_{\cC^{1,\alpha}(\overline{\Omega})} \le C\|u\|_{\cC^{1,\alpha}(\overline{\Omega})}.
\end{equation}
Then, we  use the regularity result of \cite[Proposition 2.7]{S07} (with $s=\frac{1}{2}$ and $l=0$) to get  $(-\Delta)^{\frac{1}{2}}v\in\cC^{0,\alpha}(\R^N) $ and 
\[
\|(-\Delta)^{\frac{1}{2}}v\|_{\cC^{0,\alpha}(\R^N)}\le C \|v\|_{\cC^{1,\alpha}(\R^N)} \le C \|u\|_{\cC^{1,\alpha}(\overline{\Omega})}.
\]
Also, $\|u(-\Delta)^{\frac{1}{2}}\eta\|_{\cC^{0,\alpha}(\R^N)}\le C\|(-\Delta)^{\frac{1}{2}}\eta\|_{L^{\infty}(\R^N)}\|u\|_{\cC^{0,\alpha}(\R^N)}\le  C \|u\|_{\cC^{1,\alpha}(\overline{\Omega})}$.

We now show that $I(u,\eta)\in \cC^{0,\alpha}(\R^N)$. Note that   since $\Omega_1$ is bounded, we  let $B_{R_0}$  be a ball centred at zero with radius $R_0>0$ and  containing  $\Omega_1 $. We set $R:=R_0+|x|+1$  for any fixed $ x\in \Omega_1$. Observe that if $|z|\ge R$, then $|x+ z|\ge |z|-|x|\ge R-|x|=R_0+1>R_0$. Therefore, $\eta(x+z)\equiv 0$ on $\R^N\setminus B_R$. Next, for $x_1, x_2\in \Omega_1$,  we write
\[
|I(u,\eta)(x_1)-I(u,\eta)(x_2) |\le|I_1|+|I_2|,
\]
where
\[
I_1:= \int_{
B_R}\frac{\sum_{k=1}^{2}(-1)^{k-1}[(u(x_k)-u(x_k+z))(\eta(x_k)-\eta(x_k+z))]}{|z|^{N+1}} dz 
\]
and 
\[
I_2:=\int_{
\R^N\setminus B_R}\frac{\sum_{k=1}^{2}(-1)^{k-1}[(u(x_k)-u(x_k+z))\eta(x_k)]}{|z|^{N+1}}\ dz 
\]
We estimate the integrand of $I_1$ using the fundamental theorem of calculus as follows,
\begin{align*}
&\Big|\sum_{k=1}^{2}(-1)^{k-1}[(u(x_k)-u(x_k+z))(\eta(x_k)-\eta(x_k+z))]\Big|\\
&\qquad\quad=\Big|\int_0^1\langle \nabla u(x_1+tz)-\nabla u(x_2+tz),z\rangle\ dt(\eta(x_1)-\eta(x_1+z))\\
&\qquad\qquad\qquad+\int_0^1\langle \nabla \eta(x_1+\tau z)-\nabla \eta(x_2+\tau z),z\rangle\ d\tau (u(x_2)-u(x_2+z))\big|\\
&\qquad\quad\le C |z|^2\int_0^1| \nabla u(x_1+tz)-\nabla u(x_2+tz)|\ dt\\
&\qquad\qquad\qquad+C\| u\|_{\cC^{1}{(\overline\Omega})}|z|^2\int_0^1| \nabla \eta(x_1+\tau z)-\nabla \eta(x_2+\tau z)|\ d\tau\\
&\qquad\quad\le C\big(\|u\|_{\cC^{1,\alpha}(\overline{\Omega})}+
\| u\|_{\cC^{1}{(\overline\Omega})}\big)|z|^2 |x_1-x_2|^{\alpha}\le C\| u\|_{\cC^{1,\alpha}(\overline\Omega)}|z|^2 |x_1-x_2|^{\alpha}.
\end{align*}
Consequently, 
\[
|I_1|  \le C\| u\|_{\cC^{1,\alpha}(\overline\Omega)} |x_1-x_2|^{\alpha}\int_{B_R}\frac{|z|^2}{|z|^{N+1}}\ dz\le C\| u\|_{\cC^{1,\alpha}(\overline\Omega)} |x_1-x_2|^{\alpha}.
\]
We now estimate $I_2$. Observe first that 
\[
\begin{split}
&\sum_{k=1}^{2}(-1)^{k-1}[(u(x_k)-u(x_k+z))\eta(x_k)]\\
&\qquad=[v(x_1)-v(x_2)]-[u(x_1+z)-u(x_2+z)]\eta(x_1)+ [\eta(x_1)-\eta(x_2)]u(x_2+z).
\end{split}
\]
Therefore, we have
\[
\begin{split}
|I_2|&\le C\big(\| v\|_{\cC^{1}(\overline\Omega)} +\| u\|_{\cC^{1}(\overline\Omega)} +\| u\|_{L^{\infty}(\Omega)}\big) |x_1-x_2|\int_{\R^N\setminus B_R}\frac{dz}{|z|^{N+1}}\\
&\le C\| u\|_{\cC^{1,\alpha}(\overline\Omega)} |x_1-x_2|.
\end{split}
\]
Since it not difficult to see that $I(u,\eta)\in L^{\infty}(\R^N)$, we conclude that $I(u,\eta)\in \cC^{0,\alpha}(\R^N)$.
We then consider  the equation
\[
-\Delta v= \widetilde f\quad\text{ in} \quad \R^N.
\]
Since $ \widetilde f\in \cC^{0,\alpha}(\R^N)$, we can apply the regularity theory for  classical elliptic PDEs to see that
\[
v\in \cC^{2,\alpha}(\overline{\Omega'}).
\]
Since $v = u$ in $\Omega'$ and since  $\Omega'$ was arbitrary, the  proof of Theorem \ref{Interior-regularity} is complete.
\end{proof}

\section{\texorpdfstring{$L^p$}{} theory and regularity up to the boundary: proof of Theorem \ref{strong-solution} and Theorem \ref{Holder1}}\label{Lp}

This section is dedicated to the $L^p$-theory of the operator $L$ and to the $\cC^{2,\alpha}(\overline{\Omega})$ regularity. We will first prove  the following  problem 
\begin{equation}\label{eq-Regularity}
	\begin{split}
	\quad\left\{\begin{aligned}
		L  u  :=&-\Delta u  +(-\Delta)^{s}u+q\cdot\nabla u &= f && \text{ in \quad $\Omega$,}\\
	u &=0     &&& \text{ on } \quad \R^N\setminus\Omega.
	\end{aligned}\right.
	\end{split}
	\end{equation}
	has a unique solution a $u\in W^{2,p}(\Omega)$ (see Theorem \ref{strong-solution}).
 This  extends the  $W^{2,p} $ estimate  done in \cite{SVWZ22} for $L_0$ to the case where an advection term is present in the equation.  We will need the following lemma.

\begin{lemma}\label{Uniqueness} Assume that $0<s<1/2,$  $ 1<p<\infty$ and that the advection term satisfies $q\in \cC^{0,\alpha}(\overline{\Omega})$ for some $0<\alpha<1.$ Let $u \in W^{2,p}(\Omega)$ be a solution of 
\begin{equation}\label{kernel}
	\begin{split}
	\quad\left\{\begin{aligned}
		L  u    &= 0 && \text{ in \quad $\Omega$}\\
	u &=0     && \text{ on } \quad \R^N\setminus\Omega.
	\end{aligned}\right.
	\end{split}
	\end{equation}  Then, $u=0$. 
\end{lemma}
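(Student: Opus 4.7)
The strategy is to upgrade $u$ to the regularity class required by Theorem \ref{Strong-maximum-principle} (namely $u\in \cC^2(\Omega)\cap \cC(\R^N)\cap \cL^1_s(\R^N)$) and then apply the strong maximum principle to both $u$ and $-u$ to conclude $u\equiv 0$. The heart of the argument is therefore the regularity bootstrap; the maximum principle step itself is short.

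For the bootstrap, starting from $u\in W^{2,p}(\Omega)$ with $u$ extended by zero on $\R^N\setminus \Omega$, I would first obtain $u\in \cC^{1,\beta}(\overline\Omega)$ for some $\beta\in (0,1)$ via the Sobolev--Morrey embedding, iterating the inclusions $W^{2,p}\hookrightarrow W^{1,Np/(N-p)}$ if $p$ is small in order to reach the Morrey range $p>N$. Since $u\equiv 0$ on $\partial\Omega$ and $\Omega$ is of class $\cC^{2,\alpha}$, the zero extension of $u$ to $\R^N$ lies in $\cC^{0,1}(\R^N)$ and has compact support, so in particular $u\in \cL^1_s(\R^N)$. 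Rewriting the equation as $-\Delta u+q\cdot\nabla u=-(-\Delta)^s u$ in $\Omega$, I would then invoke \cite[Proposition 2.5]{S07}---which applies because $s<1/2$ and $u$ is globally Lipschitz---to obtain $(-\Delta)^s u\in \cC^{0,1-2s}(\overline\Omega)$. Since $q\in \cC^{0,\alpha}(\overline\Omega)$ and $\nabla u\in \cC^{0,\beta}(\overline\Omega)$, the right-hand side is then H\"older continuous on $\overline\Omega$, and classical Schauder theory for the Poisson equation $-\Delta u=g$ with Dirichlet data yields $u\in \cC^{2,\gamma}(\overline\Omega)$ with $\gamma=\min(1-2s,\alpha,\beta)>0$.

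With this regularity in hand, the conclusion is immediate. Since $Lu=0\ge 0$ in $\Omega$ and $u=0\ge 0$ on $\R^N\setminus \Omega$, Theorem \ref{Strong-maximum-principle} applied to $u$ gives either $u>0$ in $\Omega$ or $u\equiv 0$ in $\R^N$. Applying the same theorem to $-u$, which also satisfies $L(-u)=0\ge 0$ in $\Omega$ and $-u=0\ge 0$ on $\R^N\setminus \Omega$, gives either $u<0$ in $\Omega$ or $u\equiv 0$ in $\R^N$. The two strict-sign alternatives are mutually incompatible, forcing $u\equiv 0$ in $\R^N$.

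The main obstacle is the regularity upgrade, specifically ensuring that $(-\Delta)^s u$ can be controlled starting only from $u\in W^{2,p}(\Omega)$: one must be careful that the zero extension genuinely lies in a function class on which \cite[Proposition 2.5]{S07} applies. The hypothesis $s<1/2$ is essential, as it is precisely what makes the fractional Laplacian of a Lipschitz function H\"older continuous on all of $\R^N$; this property degenerates at $s=1/2$ and is what forces the separate treatment of that endpoint in Theorem \ref{Main-Result-1} via the interior regularity result Theorem \ref{Interior-regularity} rather than $\cC^{2,\alpha}$ regularity up to $\partial\Omega$.
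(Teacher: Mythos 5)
Your overall strategy---bootstrap $u$ to $\cC^2(\Omega)\cap\cC(\R^N)\cap\cL^1_s(\R^N)$ via the zero extension, \cite[Proposition 2.5]{S07} and Schauder theory, then apply Theorem \ref{Strong-maximum-principle} to $\pm u$---is exactly the paper's. However, there is a genuine gap in the first step of your bootstrap when $1<p\le N$. You claim to reach $u\in\cC^{1,\beta}(\overline\Omega)$ by ``iterating the inclusions $W^{2,p}\hookrightarrow W^{1,Np/(N-p)}$,'' but Sobolev embeddings only trade derivatives for integrability: from $u\in W^{2,p}(\Omega)$ you get $u\in W^{1,t_1}(\Omega)$ with $t_1=Np/(N-p)$, and no further application of an embedding returns you to a space of the form $W^{2,t}$ with larger $t$, which is what you need both for the Morrey embedding into $\cC^{1,\beta}(\overline\Omega)$ and for the global Lipschitz bound required to invoke \cite[Proposition 2.5]{S07}. (For $N=3$ and $p=3/2$, say, $W^{2,3/2}$ embeds into $W^{1,3}$ and into every $L^q$, but not into $\cC^{0,1}$.) The missing ingredient is the equation itself: rewrite the problem as $L_0u=-q\cdot\nabla u$ and observe that the right-hand side lies in $L^{t_1}(\Omega)$; the $W^{2,p}$ estimate for the drift-free operator $L_0$ from \cite[Theorem 1.4]{SVWZ22} then upgrades $u$ to $W^{2,t_1}(\Omega)$. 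Alternating this elliptic estimate with the Sobolev embedding finitely many times reaches $u\in W^{2,t}(\Omega)$ for some $t>N$ (this is precisely the paper's two-case argument), after which the rest of your proof---zero extension in $\cC^{0,1}(\R^N)$, hence $(-\Delta)^su\in\cC^{0,1-2s}$, Schauder theory for $-\Delta u=-(-\Delta)^su-q\cdot\nabla u$, and the two-sided application of the strong maximum principle---goes through and agrees with the paper's.
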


\begin{proof}[{\bf Proof of Lemma \ref{Uniqueness}}]  We need to prove the solution is sufficiently regular first (so that we use the strong maximum principle, stated in Theorem \ref{Strong-maximum-principle}).  

First, consider the case of $ p \geq N$ and then note that we have $ L_0u= -q \cdot \nabla u.$  The right hand side is in $L^T(\Omega)$ for all $T<\infty$ and we can then apply the $L^p$ theory   for the operator  $L_0$ in \cite{SVWZ22} to see that $ u \in W^{2,T}(\Omega)$ for all $T<\infty$ and hence $ u \in C^{1,\beta}(\overline{\Omega})$ for all $ 0<\beta<1$. Now, since  we have $ u \in C^{1,\beta}( \overline{\Omega}),$ and thanks to the regularity assumption on the boundary of $\Omega$,  we can extend $u$ by zero outside  $\Omega$ and still  denote by $u$ (see \cite[Lemma 6.37]{GT77}) and get that  the extension is a $C^{0,1}( \R^N)$ function. We can then apply the regularity result of \cite[Proposition 2.5]{S07}  to see that
$$ g:=(-\Delta )^s u \in C^{0,1-2s}(\R^N).$$   
Now, we can write the equation as $ -\Delta u = -g - q \cdot \nabla u $ in $\Omega$ with $u=0$ on $ \partial \Omega.$ Hence, as $q\in \cC^{0,\alpha}(\overline{\Omega}),$ the right hand side $-g - q \cdot \nabla u$ is a H\"older function. Thus, $u \in C^{2, 1-2s}( \overline{\Omega})$ from the classical theory of elliptic PDEs. We can then apply the maximum principle to get that $u\equiv 0$ in $\R^N$.

Second, we suppose $ 1<p<N$ and set $t_1:=\frac{Np}{N-p}$.   Then we have 
$$ u \in W^{2,p}(\Omega) \subset W^{1, t_1}(\Omega)$$
by the Sobolev embedding theorem. Hence, as $ L_0u= -q \cdot \nabla u,$ the $L^p$ theory for the operator  $L_0$ in \cite{SVWZ22} yields that $ u \in W^{2, t_1}(\Omega)$. Again, the Sobolev embedding theorem  implies that $u\in W^{1,t_2}(\Omega)$, where $t_2:=\frac{Nt_1}{N-t_1}>t_1$. If $t_2<N$, we can do this a finite number of times until we get $u \in W^{2,t}(\Omega)$ for some $t>N.$ At this stage, we become in the  setting of the first case. The proof of Lemma \ref{Uniqueness} is complete.
\end{proof}

We now have all what is needed to prove Theorem \ref{strong-solution}, which we do as follows.

\begin{proof}[{\bf Proof of Theorem \ref{strong-solution}}]
We apply the method of continuity.  To ease the notation, 
 we define the operator
\[
 L_0u:= -\Delta u+(-\Delta)^su,
\]
and for $\lambda\in\R$, we  consider the family of operators
\[
L_{\lambda}u\equiv (1-\lambda)L_0u +\lambda Lu= L_0+\lambda q\cdot\nabla u.
\]
Next, for $u\in W^{2,p}(\Omega)$ and $\lambda\in\R$,  consider the problem 
\begin{equation}\label{eq-existence}
	\begin{split}
	\quad\left\{\begin{aligned}
		L_{\lambda}  u    &= f && \text{ in \quad $\Omega$}\\
	u &=0     && \text{ on } \quad \R^N\setminus\Omega.
	\end{aligned}\right.
	\end{split}
	\end{equation}
Let $\cA$ be the set given by
\begin{equation}\label{setA}
\cA:=\left\{
\begin{aligned}
\lambda\in [0,1]: ~\exists C_{\lambda}>0 \text{ such that for all  }f\in L^p(\Omega),  \eqref{eq-existence} \text{ has a  }\\
 \text{ solution $u$ such that }\|u\|_{W^{2,p}(\Omega)}\le C_{\lambda}\|f\|_{L^p(\Omega)}\qquad\qquad
 \end{aligned}
 \right\}.
\end{equation}
In \eqref{setA}, we take the constant $C_{\lambda}$ to be the smallest constant such that $\|u\|_{W^{2,p}(\Omega)}\le C_{\lambda}\|f\|_{L^p(\Omega)}$ holds. In other words, if $C_{\lambda}>\varepsilon>0$ then there exists  $f_{\varepsilon}\in L^p(\Omega)$ such that 
\begin{equation}\label{Smallest}
\|u\|_{W^{2,p}(\Omega)}\ge (C_{\lambda}-\varepsilon)\|f_{\varepsilon}\|_{L^p(\Omega)}.
\end{equation}

Note that $\cA$ is not empty since we have that $0\in \cA$ by \cite[Theorem 1.4]{SVWZ22}. Therefore, we only need to show that $1\in \cA$.  To do that, it suffices to prove that $\cA$ is both open and closed in $[0,1].$ More precisely, it suffices to prove that for any fixed $\lambda_0\in \cA$ and $f\in L^p(\Omega)$, there is an $\varepsilon>0$ such that $\lambda_0\pm\varepsilon\in \cA$ and that 
 any bounded sequence $\{\lambda_n\}_n\subset \cA$ has a convergence subsequence.

\paragraph{{\bf $\cA$ is open.}} We fix $\lambda_0\in \cA.$  We look for a solution  $u\in W^{2,p}(\Omega)$ of problem \eqref{eq-existence}  in the form $u=v_0+\Phi$, where $v_0$ solves   \eqref{eq-existence} with $\lambda=\lambda_0.$  For  $\varepsilon\in\R$, we  introduce the operator $\cN_{\varepsilon}$ given by
$$\Psi = \cN_{\varepsilon}(\Phi)$$
where $\Psi$  solves the equation
$$
L_{\lambda_0} \Psi = \pm\varepsilon\left( q\cdot\nabla v_0+ q\cdot\nabla \Phi\right).
$$
The operator  $\cN_{\varepsilon}$ maps $W^{2,p}(\Omega)$ into itself.
We claim that if $\varepsilon$  is chosen appropriately,  then $\cN_{\varepsilon}$ is a contraction in $W^{2,p}(\Omega)$. Indeed, since $\lambda_0\in A$ there exists a constant  $C_{\lambda_0}>0$ such that 
\[
\|\cN_{\varepsilon}(\Phi)\|_{W^{2,p}(\Omega)}=\|\Psi\|_{W^{2,p}(\Omega)}\le C_{\lambda_0}\|\pm\varepsilon\left( q\cdot\nabla v_0+ q\cdot\nabla \Phi\right)\|_{L^{p}(\Omega)}.
\]
Now, let $\Phi_1$ and $\Phi_2$ be taken in $W^{2,p}(\Omega)$. Then
\begin{align*}
\|\cN_{\varepsilon}(\Phi_1)-\cN_{\varepsilon}(\Phi_2)\|_{W^{2,p}(\Omega)}&=\|\Psi_1-\Psi_2\|_{W^{2,p}(\Omega)}\\&\le |\varepsilon| C_{\lambda_0}\|q\|_{L^{\infty}(\Omega)}\|\nabla (\Phi_1-\Phi_2)\|_{L^{p}(\Omega)}\\
&\le |\varepsilon| C_{\lambda_0}C\|q\|_{L^{\infty}(\Omega)}\|\Phi_1-\Phi_2\|_{W^{2,p}(\Omega)},
\end{align*}
where the constant $C$ and $C_{\lambda_0}$ are independent of $\varepsilon$ and $\Phi$. Taking $\varepsilon$ such that $$|\varepsilon|\le\frac{1}{2C_{\lambda_0}C\|q\|_{L^{\infty}(\Omega)}},$$ we get that $\cN_{\varepsilon}$ is a contraction mapping. By the fixed point theorem,  for each such $\varepsilon$ there exists a fixed point $\Phi$ such that $\cN_{\varepsilon}(\Phi)=\Phi$. We just showed that the equation  $L_{\lambda_0\pm\varepsilon}u=f$ has a solution $u\in W^{2,p}(\Omega).$ Moreover, 
from the definition of $\cN_{\varepsilon}$ and the choice of $\varepsilon$ above, it follows that
\begin{align*}
\|\Phi\|_{W^{2,p}(\Omega)}=\|\cN_{\varepsilon}(\Phi)\|_{W^{2,p}(\Omega)}&\le  |\varepsilon| C_{\lambda_0}C\|q\|_{L^{\infty}(\Omega)}\left(\|\Phi\|_{W^{2,p}(\Omega)} + \|v_0\|_{W^{2,p}(\Omega)}\right)\\
& \le \frac{1}{2}\left(\|\Phi\|_{W^{2,p}(\Omega)} + \|v_0\|_{W^{2,p}(\Omega)}\right)
\end{align*}
so that $\|\Phi\|_{W^{2,p}(\Omega)}\le 2\|v_0\|_{W^{2,p}(\Omega)}$ and  the norm of $u$ in $W^{2,p}(\Omega)$ becomes
\begin{align*}
\|u\|_{W^{2,p}(\Omega)}&\le \big(\|v_0\|_{W^{2,p}(\Omega)} +\|\Phi\|_{W^{2,p}(\Omega)}\big)\\
&\le C_2\|v_0\|_{W^{2,p}(\Omega)}\\
&\le C\|f\|_{L^p(\Omega)}.
\end{align*}
Therefore, $\lambda_0\pm \varepsilon\in \cA.$\\

\paragraph{\bf $\cA$ is closed.} In other to complete the proof of  theorem, we show that $\cA$ is closed. Let then $\{\lambda_n\}_n\subset \cA$ be a sequence  such that $\lambda_n\to \lambda_0\in \R$ as $n\to \infty$. We claim that $\lambda_0\in \cA$.  Let $f\in L^p(\Omega)$. Since $\lambda_n\in \cA$ for any $n$, there exists $u_n$ that satisfies   \eqref{eq-existence}  with $\lambda_n$ in place of $\lambda$ and
\[
 \|u_n\|_{W^{2,p}(\Omega)}\le C_{n}\|f\|_{L^p(\Omega)}
\]
where $C_n:=C_{\lambda_n}$. Therefore, if $\{C_n\}_n$ is  bounded in $n$ then, the sequence $\{u_n\}_n$ is also uniformly bounded in $W^{2,p}(\Omega)$ so that passing to a subsequence, we have 
\[
u_n\rightharpoonup u\ \text{ weakly \  in } \ W^{2,p}(\Omega)\quad \text{ and }\quad u_n \to u\ \text{ \ strongly\  in }\ W^{1,p}(\Omega)
\] 
with $u$ satisfying the problem \eqref{eq-existence} with $\lambda_0$ in place of $\lambda$ and 
\[
\|u\|_{W^{2,p}(\Omega)}\le \liminf_{n\to \infty}\|u_n\|_{W^{2,p}(\Omega)}\le C\|f\|_{L^p(\Omega)}.
\]
This shows that $\lambda_0\in \cA$ and ends the proof in the case where $\{C_n\}_n$ is bounded.

Indeed, we will show next that the only possible case. Assume to the contrary that 
$\{C_n\}_n$ is unbounded. Then, passing  to a subsequence,  we may have that 
 $C_n\to \infty$ as $n\to \infty.$ Thus, there exists a sequence $\{f_n\}_n$ such that  for large $n$ we have 
\[
 \|u_n\|_{W^{2,p}(\Omega)}\ge (C_{n}-1)\|f_n\|_{L^p(\Omega)}.
\]
Note that the above inequality holds since from \eqref{Smallest} in which the constant $C_n:=C_{\lambda_n}$ is the smallest constant such that  $\|u\|_{W^{2,p}(\Omega)}\le C_n\|f\|_{L^p(\Omega)}$ holds. Let 
$$t_n:= \|u_n\|_{W^{2,p}(\Omega)}, \qquad v_n:= \frac{u_n}{t_n},\qquad\widetilde f_n:= \frac{f_n}{t_n}.$$
Then $\|v_n\|_{W^{2,p}(\Omega)}=1$,  $\widetilde f_n\to 0$ in $L^p(\Omega)$ as $n\to \infty$ and $v_n$ satisfies the equation
 \begin{equation}\label{eq-existence-n}
L_{0}v_n= -\lambda_n q\cdot\nabla v_n +\widetilde f_n\quad\text{ in }\quad\Omega, \qquad v_n=0 \ \text{ on }\ \R^N\setminus\Omega.
\end{equation}
It follows from \cite[Theorem 1.4]{SVWZ22}  that
\begin{align*}
\|v_n\|_{W^{2,p}(\Omega)}&\le C\left(\|\widetilde f_n-\lambda_n q\cdot\nabla v_n\|_{L^p(\Omega)}+\|v_n\|_{L^p(\Omega)}\right)\\
&\le C\left(\|\widetilde f_n\|_{L^p(\Omega)}+\|q\|_{L^{\infty}(\Omega)}\| v_n\|_{W^{1,p}(\Omega)}+\|v_n\|_{L^p(\Omega)}\right)\\
&\le C\left(K+\|q\|_{L^{\infty}(\Omega)}+1\right).
\end{align*}
 This shows that the sequence $\{v_n\}_n$ is uniformly  bounded in $W^{2,p}(\Omega)$  so that the Banach-Alaoglu Theorem implies the existence of a subsequence, which we still label as $\{v_n\}_n$, that converges weakly to some $v_0\in W^{2,p}(\Omega)$ and strongly in $W^{1,p}(\Omega)$ thanks to the compactness of the  Sobolev embedding.  Hence $\|v_0\|_{W^{2,p}(\Omega)}\le 1$.  We now consider two possibilities.

 If $v_0=0$, this will  contradict  the normalization  $\|v_n\|_{W^{2,p}(\Omega)}=1$: indeed, if $v_0=0$, then
 \[
  \text{$\lambda_{n}q\cdot\nabla v_n\to 0$\quad  strongly\  in\  $L^p(\Omega)$}
\]
  thanks to the compactness of the  Sobolev embedding. Hence, we can use the $L^p$ theory for $L_0$  and \eqref{eq-existence-n} to see that  $v_n\to 0$ in $W^{2,p}(\Omega),$  which contradicts the normalization of $v_n$ in $W^{2,p}(\Omega)$. 
  
  The other possibility is that  $v_0\neq 0$. In such case,  we can pass to the limit    in \eqref{eq-existence-n} to get
\begin{equation}\label{eq-existence-0}
 L_{\lambda_0} v_0 =0\quad\text{ in }\quad\Omega,\qquad v_0=0 \  \text{ on }\ \R^N\setminus\Omega.
\end{equation}
We will discuss the consequences of \eqref{eq-existence-0} according to the values of $s$.

If $0<s<1/2,$ then as $v_0\in W^{2,p}(\Omega)$ for $1<p<\infty$, it follows from Lemma \ref{Uniqueness} that $v_0\equiv 0$, which yields again a contradiction. 

If $s=1/2,$ then we can apply the interior regularity result of Theorem \ref{Interior-regularity} to conclude that $v_0\in \cC^2(\Omega).$ The Sobolev embedding also tells us that $v_0\in C^{1,\alpha}(\overline{\Omega}).$ Thus, $v_0\in \cC^2(\Omega)\cap \cC(\R^N).$  The maximum principle in Theorem \ref{Strong-maximum-principle}  implies that $v_0\equiv 0,$ which is again a contradiction.

Therefore, for $s\in(0,1/2],$ the sequence $\{u_n\}_n$  remains uniformly bounded in $W^{2,p}(\Omega)$ and hence $\{C_n\}_n$ is bounded.  This is a contradiction and this completes the proof of Theorem \ref{strong-solution}. 
\end{proof}

  \begin{proof}[{\bf Proof of Theorem \ref{Holder1}}]
        We use the $L^p$ theory in Theorem \ref{strong-solution}. Indeed, since $ f \in C^{0,\alpha}(\overline{\Omega})$ and $\Omega$ is bounded, $ f \in L^p(\Omega)$ for any $p<\infty$. It follows then from Theorem \ref{strong-solution} that $ u \in C^{1,\beta}(\overline{\Omega})$ for any $\beta\in (0,1)$ (we choose all $p>N$). Take in particular $\beta=\alpha.$ From  the uniqueness of the solution, we get that 
    \begin{equation}\label{Estimate-1}
             \|u \|_{C^{1,\alpha}(\overline{\Omega})}\lesssim \|f\|_{{L^p}(\Omega)} \le C \|f\|_{C^{0,\alpha}(\overline{\Omega})}.
    \end{equation}
        As before, we can extend $u$ by zero outside  $\Omega$  by  a $C^{0,1}$  function in $\R^N$ and still  denote by $u$.   We apply again the regularity result of \cite[Proposition 2.5]{S07}  to see that $g:=(-\Delta )^s u \in C^{0,1-2s}(\R^N)$ with a control on the $\cC^{0,\alpha}$- norm of $g$ as follows
\begin{equation}\label{Estimate-2}
    \|g\|_{C^{0,\alpha}(\R^N)}\le C\|u \|_{C^{1,\alpha}(\overline{\Omega})}\le C\|f\|_{\cC^{1,\alpha}(\overline\Omega)}.
\end{equation}
Next, since $ q$ is H\"older over $\bar{\Omega}$, we have that $q\cdot\nabla u\in \cC^{0,\alpha}(\overline\Omega)$.  Now,  we  write the equation as $ -\Delta u = -g - q \cdot \nabla u $ in $\Omega$ with $u=0$ on $ \partial \Omega$. Hence, as $\Omega$ has smooth boundary, $u \in C^{2, \alpha}( \overline{\Omega})$ from the classical theory of elliptic PDEs. Moreover, combining \eqref{Estimate-1} and \eqref{Estimate-2}, there exists a constant $C>0$ such that
$$ \| u \|_{C^{2,\alpha}(\overline{\Omega})} \le C \|f\|_{C^{0,\alpha}(\overline{\Omega})}.$$ 
The proof of Theorem \ref{Holder1} is complete.
    \end{proof}

\section{Proof of Theorem \ref{Main-Result-1}}
 In this section we complete the proof  of Theorem \ref{Main-Result-1}. We recall the following statement of Krein-Rutman Theorem from \cite[Theorem 1.2]{D06} as we will use it in the proof of Theorem \ref{Main-Result-1}.

\begin{athm}[Krein-Rutman Theorem, \cite{D06}]\label{K-R}
Let $X$ be a Banach space,  $K\subset X$ a solid cone, $T:X\to X$ a compact linear operator which satisfies $T(K\setminus \{0\})\subset K^{\circ}$. Then, 
\begin{enumerate}[(i)]
    \item $r(T)>0$ and $r(T)$ is a simple eigenvalue with an eigenfunction $v\in K^{\circ}$; there is no other eigenvalue with a positive eigenfunction.
    \item   $|\mu|<r(T)$ for all eigenvalues $\mu\neq r(T)$.
\end{enumerate}
\end{athm}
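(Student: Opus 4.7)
The plan is to prove this classical statement in four standard stages: lower-bounding the spectral radius through order arguments, constructing a principal eigenvector via the resolvent, establishing simplicity and uniqueness through strong positivity, and finally handling the strict spectral domination in (ii) via duality.

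First, I would show $r(T)>0$. Fix any $u_0\in K^\circ$. Since $Tu_0\in K^\circ$ and $K^\circ$ is open, there exists $\alpha>0$ with $Tu_0-\alpha u_0\in K$. Iterating and using that $T$ leaves $K$ invariant (because $K^\circ\subset K$ and $K$ is closed), one gets $T^n u_0-\alpha^n u_0\in K$ for every $n$. Evaluating any $\ell$ in the dual cone $K^*$ with $\ell(u_0)>0$ yields $\|T^n\|\geq \alpha^n\ell(u_0)/(\|\ell\|\|u_0\|)$, so Gelfand's formula $r(T)=\lim_n\|T^n\|^{1/n}$ gives $r(T)\geq \alpha>0$.

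Second, I would construct a positive eigenvector at $r(T)$. For $\lambda>r(T)$ the resolvent
\[
R_\lambda=(\lambda I-T)^{-1}=\sum_{k=0}^{\infty}\lambda^{-k-1}T^k
\]
maps $K$ into itself. Normalizing $v_\lambda:=R_\lambda u_0/\|R_\lambda u_0\|\in K$, one argues that $\|R_\lambda u_0\|\to\infty$ as $\lambda\downarrow r(T)$; otherwise $R_\lambda$ would extend holomorphically across $r(T)$, contradicting the fact that a compact operator with positive spectral radius has $r(T)$ in its spectrum. Then $(\lambda I-T)v_\lambda\to 0$ in norm, and the compactness of $T$ provides a subsequential limit $v\in K$ with $\|v\|=1$ and $Tv=r(T)v$. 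The hypothesis then places $v=r(T)^{-1}Tv\in K^\circ$.

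Third, I would settle geometric simplicity and uniqueness of the positive eigenpair. For any real eigenvector $w$ associated with $r(T)$, the quantity $t^*:=\inf\{t>0 : tv-w\in K\}$ is finite because $v\in K^\circ$. By closedness of $K$, the element $w^*:=t^*v-w$ lies in $\partial K$ and satisfies $Tw^*=r(T)w^*$. Strong positivity would then place $w^*\in K^\circ$ if $w^*\neq 0$, contradicting $w^*\in\partial K$; hence $w\in \mathbb{R}v$. (For complex eigenvectors at $r(T)$ one splits into real and imaginary parts and applies the same argument.) An analogous comparison of any positive eigenvector $u$ for some eigenvalue $\mu$ against $v$ forces $u$ to be a positive multiple of $v$ and $\mu=r(T)$.

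Finally, part (ii), the strict inequality $|\mu|<r(T)$ for every other eigenvalue $\mu$, is where I expect the main obstacle. The natural route is to run the first three steps for the compact adjoint $T^*$ on $X^*$ to produce a positive eigenfunctional $\ell$ with $T^*\ell=r(T)\ell$, strictly positive on $K\setminus\{0\}$. For any other eigenpair $(\mu,u)$ the identity $\mu\,\ell(u)=\ell(Tu)=(T^*\ell)(u)=r(T)\ell(u)$ forces $\ell(u)=0$ whenever $\mu\neq r(T)$. To rule out $|\mu|=r(T)$ with $\mu\neq r(T)$, I would examine the bounded sequence $T^n u/\mu^n$: compactness of $T$ together with the strict positivity of $\ell$ would extract a real accumulation point that furnishes an eigenvector at $r(T)$ independent of $v$, contradicting Step 3. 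Algebraic simplicity then follows by the same duality argument applied to generalized eigenvectors. The genuine technical delicacy here is establishing strict positivity of the dual eigenfunctional without presupposing that $K^*$ is solid, which requires a careful Hahn-Banach separation tailored to the solid-cone hypothesis on $K$ and the strong positivity of $T$.
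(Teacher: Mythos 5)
The paper does not prove this statement at all: it is quoted verbatim from \cite[Theorem 1.2]{D06} and used as a black box in the proof of Theorem \ref{Main-Result-1}, so there is no in-paper argument to compare yours against. Your proposal is an attempt to reprove the cited classical theorem, and it follows the standard Krein--Rutman/Amann route (order iteration for $r(T)>0$, resolvent blow-up for the principal eigenvector, a sliding comparison for simplicity, duality for the rest). Two steps as written, however, do not go through.

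First, in your second step you infer that boundedness of $\|R_\lambda u_0\|$ as $\lambda\downarrow r(T)$ would let $R_\lambda$ ``extend holomorphically across $r(T)$''. Boundedness of the resolvent applied to a single vector does not control the operator norm $\|R_\lambda\|$; to pass from the order bound $0\le x\le c\,u_0$ (valid because $u_0\in K^\circ$ is an order unit) to the norm bound $\|R_\lambda x\|\le c'\|R_\lambda u_0\|$ you need the cone to be \emph{normal}, which is not part of the hypothesis ``solid cone''. One must either add normality, or run a Pringsheim-type argument on the power series $\lambda\mapsto \ell(R_\lambda u_0)$ with nonnegative coefficients, or follow Du's actual route via the fixed point index. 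Second, and more seriously, your argument for part (ii) collapses: if $Tu=\mu u$ then $T^nu/\mu^n=u$ for every $n$, so ``examining the bounded sequence $T^nu/\mu^n$'' yields nothing and produces no eigenvector at $r(T)$. Excluding $|\mu|=r(T)$ with $\mu\ne r(T)$ is precisely the hard part of the strong Krein--Rutman theorem; the standard proof works on the two-dimensional real $T$-invariant subspace spanned by $u_1=\mathrm{Re}\,u$ and $u_2=\mathrm{Im}\,u$, on which $T$ acts as a rotation--dilation, and analyzes the comparison function $t(\phi)=\inf\{t>0:\ tv-(\cos\phi\,u_1+\sin\phi\,u_2)\in K\}$, showing it is constant in $\phi$ and then contradicting strong positivity. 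Your duality step (that $\ell(u)=0$ whenever $\mu\ne r(T)$) is sound once the positive eigenfunctional $\ell$ is constructed --- which can be done cleanly by showing the rank-one Riesz projection at $r(T)$, being the limit of the positive operators $(\lambda-r(T))R_\lambda$, maps $K$ into $K$ --- but that identity only shows such $u$ is not positive; it gives no bound on $|\mu|$.
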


\medskip

We now give the proof of Theorem \ref{Main-Result-1}.

\begin{proof}[Proof of Theorem \ref{Main-Result-1}]  \label{Proof-Main}

Define the space 
\[
X:= \big\{u\in \cC^{0,1}(\overline{\Omega}): \quad u=0 \text{ on }\partial\Omega \text{ and } u=0\ \text{ in }\ \R^N\setminus\Omega\big\}
\]
and the cone 
 \[
K:=\big\{ u\in X:\quad u\ge 0\ \text{ in }\ \overline\Omega\big\}.
\]
We will denote the interior of $K$ by $K^{\circ}.$  Indeed, \[K^\circ=\{ u \in X: \mbox{ there is some } \e>0 \mbox{ such that }  u(x) \ge \e \dist(x, \partial \Omega))\text{ for all } x \in \Omega \}.\] We now  define  the operator 
$$T:X\to X$$
by  $Tf=u,$ where $u$ is the solution of the problem \begin{equation}\label{eq-Regularity5}
	\begin{split}
	\quad\left\{\begin{aligned}
		L  u :=&-\Delta u  +(-\Delta)^{s}u+q\cdot\nabla u  &= f && \text{ in \quad $\Omega$,}\\
	u &=0     &&& \text{ on } \quad \R^N\setminus\Omega.
	\end{aligned}\right.
	\end{split}
	\end{equation} 
Clearly $T$ is a linear operator. The operator $T$ is bounded since, by Theorem \ref{strong-solution} and the Sobelev embedding, we have    $\|u\|_{\cC^{1,\alpha}(\overline{\Omega})}\leq C \|f\|_{\cC^{0,\alpha}(\overline{\Omega})}.$  Thus,   $$\|u\|_{\cC^{0,1}(\overline{\Omega})}\lesssim \|f\|_{\cC^{0,1}(\overline{\Omega})}.$$

Let us now prove that $T(K\setminus\{0\})\subseteq K^{\circ}.$ Let $f\in K$ such that $f\not\equiv 0$ and set $Tf=w.$ Hence, $Lw=f$ and $w$ satisfies \eqref{eq-Regularity5}. We separate two cases according to $s.$

\noindent 
{\bf Case 1. $0<s<1/2$:} in this case,  $w\in \cC^{2,\alpha}(\overline{\Omega})$ (Theorem \ref{Holder1}). We can apply the strong maximum principle (Theorem \ref{Strong-maximum-principle}) to the following problem
\[
Lw=f\ge 0 \quad\text{ in }\quad\Omega, \qquad w=0 \text{ on }\ \R^N\setminus\Omega.
\] 
We obtain that $w>0$ in ${\Omega}.$ Moreover, as $w=0$ on $\partial \Omega,$ it follows from Theorem \ref{Hopf} that $\partial_ \nu w(x)<0$ for all $x\in \partial \Omega.$

\noindent
{\bf Case 2. $s=1/2$:} in this case, we have the interior regularity of $w$ from Theorem \ref{Interior-regularity}. So we still have  that $w\in \cC^2(\Omega)\cap \cC(\R^n),$ as $w$ solves \eqref{eq-Regularity5} and  $w\in \cC^{1,\alpha}(\overline{\Omega})$ by the Sobolev embedding.  Applying the Hopf Lemma---stated in Theorem  \ref{Hopf2},  we obtain again here that $\partial_\nu w(x)<0$ for all $x\in \partial \Omega.$ 

Thus, for all $s\in (0,1/2]$, we have $\displaystyle{\partial_\nu w}(x)<0$ for all $x\in \partial \Omega.$ As $\partial \Omega$ is compact, then $\displaystyle{\max_{\partial \Omega}{\partial_{ \nu}}w(x)<0}.$ This allows us find an open $\cC^{0,1}$ neighbourhood $\mathcal{O}$ of $w,$ such that \[\mathcal{O}\subseteq \{ u \in X: \mbox{ there is some } \e>0 \mbox{ such that }  u(x) \ge \e \dist(x, \partial \Omega)), \forall  x \in \Omega \}\subseteq K^\circ.\] 
Thus, $w=Tf \in K^\circ.$

 
 We now verify that $T$ is compact. Let $\{f_n\}_n\subset X$ be a  bounded sequence in $X.$ Let us say that   $\|f_n\|_{L^{\infty}(\Omega)}\le 1$. It follows that $f_n\in L^p(\Omega)$ for any $1<p<\infty$  and from the $L^p$- theory in Theorem \ref{strong-solution}, we have that $Tf_n\in W^{2,p}(\Omega)$ for any $1<p<\infty$. The Sobolev embedding   implies that $Tf_n\in \cC^{1,\alpha}(\overline{\Omega})$  for any $0<\alpha<1$ and hence
 \[
 \|Tf_n\|_{\cC^{1,\alpha}(\overline{\Omega})}\le \|u\|_{W^{2,p}(\Omega)}\lesssim \|f_n\|_{L^{p}(\Omega)}\le C,
 \]
 where $C$ is a constant independent of $n.$
 This implies that $\{Tf_n\}_n$ is bounded in $\cC^{1,\alpha}(\overline{\Omega}).$ By the Arzela-Ascoli theorem, the sequence $\{Tf_n\}_n$ has a convergent subsequence (the convergence of the subsequence holds in $\cC^{1}(\overline{\Omega})$ and hence in $\cC^{0,1}(\overline\Omega)$).   This proves that $T$ is compact.

 Therefore, we can apply the  Krein-Rutman theorem to assert that  there exists a unique positive real number $\varrho(T)>0$ and a unique (up to multiplication by a nonzero constant) positive function $f\in K^{\circ}$ such that  $Tf=\varrho(T)f$. Therefore,  the function $\phi_1:=Tf>0$ satisfies the problem
	\begin{equation}\label{eq-proof}
	\begin{split}
	\quad\left\{\begin{aligned}
		L  \phi_1   &= \varrho(T)^{-1} \phi_1 && \text{ in \quad $\Omega$}\\
	\phi_1 &=0     && \text{ on } \quad \R^N\setminus\Omega.
	\end{aligned}\right.
	\end{split}
	\end{equation}
The function $\varphi_1$ is in $\cC^{2,\alpha}(\overline{\Omega})$ (from Theorem \ref{Holder1}). 

To complete the proof of Part (a) of the theorem, we set $\lambda_1(\Omega,q) :=\varrho(T)^{-1}.$ Then,  from the Krein-Rutman theorem (Theorem \ref{K-R}), $\lambda_1(\Omega,q)$ is the principal eigenvalue for $L$ in $\Omega$ with the corresponding unique (up to multiplication by a nonzero constant) positive eigenfunction given by $\phi_1\in \cC^{2,\alpha}(\overline{\Omega})$.

From part (ii) of Theorem \ref{K-R}, we know that  $\varrho=r(T)>0$ satisfies: any eigenvalue $\mu\neq \varrho$ for $T$ satisfies $|\mu|< \varrho.$ Now, since $\lambda_1(\Omega,q) = \varrho(T)^{-1}>0,$  the proof of part (b)  follows.

We are left to prove the max-inf formulation \eqref{minmax} of $\lambda_1(\Omega, q)$-- stated in part (c) of Theorem \ref{Main-Result-1}. 
We recall that $\cV(\Omega)$ is given by
	\[
\cV(\Omega):=\Big\{u\in \cC^2(\Omega)\cap\cC^1( \overline\Omega)\cap \cC_c(\R^N): \ u>0 \text{ in }{\Omega} \ \text{ and }\ u\equiv 0 \text{ on }\ \R^N\setminus\Omega\Big\}.
\]  
Since  $\phi_1\in \cV(\Omega)$, it follows that
\begin{equation*}
	\lambda_1(\Omega,q)\le \sup_{u\in \cV(\Omega)}\inf_{x\in  \Omega} \frac{Lu(x)}{u(x)}.
	\end{equation*}
 Thus, we only need to prove that 
 \begin{equation*}
	\lambda_1(\Omega,q)\ge \sup_{u\in \cV(\Omega)}\inf_{x\in  \Omega} \frac{Lu(x)}{u(x)}.
	\end{equation*} and once we have equality then one sees we can replace the sup with a max.  So we now argue by contradiction. Suppose that 
	\begin{equation*}
	\lambda_1(\Omega,q)< \sup_{u\in \cV(\Omega)}\inf_{x\in \Omega} \frac{Lu(x)}{u(x)}.
	\end{equation*}
	Then, there exists  $\varepsilon>0$ and  a function $v\in \cV(\Omega)$ such that 
	\begin{equation}\label{epsilon-def}
	\lambda_1(\Omega,q)+\varepsilon< \inf_{x\in \Omega} \frac{Lv(x)}{v(x)}.
	\end{equation}  Then note we have $Lv > (\lambda_1(\Omega,q)+\e)v $ in $ \Omega$ with $ v=0$ on $ \R^N \backslash \Omega$ and hence by Hopf's Lemma we have $ \partial_\nu v<0$.    
 We now define \begin{equation}\label{tau}
 \tau^*:=\sup\{ \tau>0 :\ v-\tau \phi_1\ge 0 \ \text{ in }\ \Omega\}
 \end{equation}
 and note $ 0<\tau^*<\infty$ after noting that $ \phi_1$ is sufficiently regular and the above Hopf result for $v$.  
 
 We now set $ w= v- \tau^* \phi_1$. First note that $ w \ge 0$ in $ \Omega$ and $ w=0$ on $\R^N\setminus\Omega$. Since $ \e>0$ we see that $ v$ cannot be a multiple of $ \phi_1$ and hence $ w$ is not identically zero.  Then note we have 
 \[ Lw  = Lv - \tau^* L\phi_1 > \e v + \lambda_1(\Omega, q) w\geq 0 \quad \mbox{ in } \Omega.\] 
 From the strong maximum principle we have $ w>0$ in $ \Omega$ or $w\equiv 0$ in $\R^N$. However, $w\equiv 0$ contradicts that $Lw>0.$ Now, from Hopf Lemma (Theorem \ref{Hopf2}), we know that $\partial_{\nu} v-\partial_\nu (\tau^* \phi_1)=\partial_\nu w<0$ on $\partial \Omega.$  Thus, as $v\geq \tau^*\phi_1\geq  0$ and $\partial_\nu (\tau^* \phi_1)>\partial _\nu v$ we can still find $\delta >0$ such that $v\geq (\tau^*+\delta)\phi_1\geq 0$ in $\Omega.$ This contradicts   the fact that $\tau^*$ is the largest possible  in \eqref{tau}. Therefore \eqref{epsilon-def} is false and we have 
\begin{equation}\label{sup}
	\lambda_1(\Omega,q)=\sup_{u\in \cV(\Omega)} \inf_{x\in \Omega} \frac{Lu(x)}{u(x)}.
	\end{equation}
Furthermore, we know from part (a) of this theorem that 
\[\phi_1\in \cV(\Omega)\text{ and }L\phi_1=\lambda_1(\Omega, q)\phi_1\text{ in }\Omega.\] Thus, the $\sup$ in \eqref{sup} is indeed a $\max$ that is attained at $\phi_1.$
This completes the proof of $(c)$ and hence the proof of Theorem \ref{Main-Result-1}.
\end{proof}

\bibliographystyle{amsplain}

\end{document}